\newtheorem{thm}{Theorem}
\newtheorem{prop}{Proposition}
\title{Time-fractional porous medium equation: Erd\'elyi-Kober integral equations, compactly supported solutions, and numerical methods}
\author{
Belen L\'opez\thanks{Departamento de Matem\'aticas, Universidad de Las Palmas de Gran Canaria,	Campus de Tafira Baja, $35017$ Las Palmas de Gran Canaria, Spain.},
Hanna Okrasi{\'n}ska-P{\l}ociniczak\thanks{Department of Mathematics, Wroclaw University of Environmental and Life Sciences, ul. C.K. Norwida 25, 50-275 Wroclaw, Poland}, \\
\L ukasz P\l ociniczak\thanks{Faculty of Pure and Applied Mathematics, Wroc{\l}aw University of Science and Technology, Poland, \underline{corresponding author:} lukasz.plociniczak@pwr.edu.pl},
Juan Rocha$^*$
}
\date{}
\begin{document}
\maketitle

\begin{abstract}
	The time-fractional porous medium equation is an important model of many hydrological, physical, and chemical flows. We study its self-similar solutions, which make up the profiles of many important experimentally measured situations. We prove that there is a unique solution to the general initial-boundary value problem in the one-dimensional setting. When supplemented with boundary conditions from the physical models, the problem exhibits a self-similar solution described with the use of the Erd\'elyi-Kober fractional operator. Using a backward shooting method, we show that there exists a unique solution to our problem. 
	
	The shooting method is not only useful in deriving the theoretical results. We utilize it to devise an efficient numerical scheme to solve the governing problem along with two ways of discretizing the Erd\'elyi-Kober fractional derivative. Since the latter is a nonlocal operator, its numerical realization has to include some truncation. We find the correct truncation regime and prove several error estimates. Furthermore, the backward shooting method can be used to solve the main problem, and we provide a convergence proof. The main difficulty lies in the degeneracy of the diffusivity. We overcome it with some regularization. Our findings are supplemented with numerical simulations that verify the theoretical findings. \\
	
	\noindent\textbf{Keywords}: time-fractional porous medium equation, Erd\'elyi-Kober fractional operator, numerical method.\\
	
	\noindent\textbf{AMS Classification}: 34A08, 65M12, 76S05. 
\end{abstract}

\section{Introduction}
Our main interest is the following time-fractional porous medium problem on the half-line
\begin{equation}
	\label{eqn:MainPDE}
	\begin{cases}
		\partial_t^\alpha u = \left(D(u)u_x\right)_x, & x \in\mathbb{R}_+, \; t\in (0,T), \; \alpha\in(0,1)\\
		u(x,0) = 0, & x \in\mathbb{R}_+, \\
		u(0,t) = M, & t \in (0,T),
	\end{cases}
\end{equation}
where $T>0$ is the final time, $M>0$ the initial value, while the Caputo fractional derivative is defined with the help of the fractional integral $I^\alpha_t$
\begin{equation}
	\label{eqn:Caputo}
	\partial_t^\alpha u(x,t) := I^{1-\alpha}_t u_t(x,t), \quad I^\alpha_t u(x,t) = \frac{1}{\Gamma(\alpha)}\int_0^t (t-s)^{\alpha-1} u(x,s) ds, \quad \alpha\in(0,1).
\end{equation}
For the diffusivity $D$ we assume that it is a $C^1(\mathbb{R}_+)$ function with
\begin{equation}\label{eqn:Diffusivity}
	D(0) = 0, \quad D(u) > 0, \quad D'(u) > 0, \quad u > 0. 
\end{equation}
The most important example is the typical power type (or Brookes-Correy model, as known in hydrology \cite{brooks1965hydraulic}) diffusivity for which $D(u) \propto u^m$ with $m>0$. Note that we assume the \emph{degeneracy}, that we allow for a situation when $D(0) = 0$. This has a profound effect on the solution and is the main reason for the finite speed of propagation (for a comprehensive treatment see \cite{vazquez1992introduction,vazquez2007porous}). The mathematical treatment of the considered PDE (\ref{eqn:MainPDE}) has attracted some recent attention, especially for the time-fractional case. For example, bounded weak solutions of the degenerate and non-degenerate cases have been found in \cite{wittbold2021bounded} in the case of vanishing Dirichlet data and the problem on a bounded domain $x\in\Omega\subset\mathbb{R}$. A very general abstract study of the associated problem has also been given in \cite{akagi2019fractional}. Moreover, in \cite{Dji18} a similar problem has been considered in the full space $x\in\mathbb{R}^d$ and the authors have proved the existence and uniqueness of a complactly supported solution provided that the initial data have this property. It seems that the problem on the half-line has not been investigated adequately in the time-fractional setting. In this paper, we present some further analytical and numerical results that go beyond our initial works \cite{plociniczak2018existence,plociniczak2019numerical,plociniczak2014approximation}. Our main observation is based on the fact that the considered initial and boundary data are self-similar, which allow for a transformation of the governing PDE into an ordinary integro-differential equation. The main evolution operator then becomes the Erd\'elyi-Kober (EK) fractional operator that has previously been found in complex analysis and special functions \cite{kiryakova1997transmutation,sneddon1975use}. Furthermore, when analyzing stochastic processes derived from fractional Brownian motion, the EK operator becomes the main driving force to evolve diffusive dynamics \cite{pagnini2012erdelyi}. 

The problem (\ref{eqn:MainPDE}) models moisture imbibition in the essentially one-dimensional porous medium with the boundary. The initially dry domain is suddenly exposed to a constant concentration of moisture at the boundary. This is a typical setting for measuring the properties of the medium \cite{wang2011mesoscale}. In this setup, the obvious type of solution to look for is the self-similar profile, which is a function of the self-similar variable $x/t^{1/2}$. However, as some new experiments on modern materials show, diffusion can undergo slower (sub-diffusive) and faster (super-diffusive) dynamics \cite{El04,Kun01,lockington2003anomalous,ren2021quantifying,wu2019anomalous,Sun13,El20,zhao2022hydraulic}. Further examples come from biology \cite{Lev97,Sun17}, physics \cite{Sch97,Del05}, and other fields of science. In \cite{Ger10} it was suggested that the time-fractional Caputo derivative is adequate for modeling such a phenomenon. This approach was successful in the sense that the modeling outcome reproduced the experimental data with good precision. A physical derivation of the model in the hydrological setting is given in \cite{plociniczak2015analytical}.

There is a substantial amount of various numerical methods for the diffusion with the time-fractional derivative (for a review, see \cite{Die02,garrappa2018numerical,li2015numerical}). We would like to stress the fact that most of the results consider linear and only space-dependent diffusivity $D=D(x)$. The approaches are based on different kinds of Caputo discretization method and spatial scheme. For example, the reader can consult several approaches in \cite{Kop19,Lan05} for the so-called L1 Caputo discretization scheme and convolution quadrature \cite{schadle2006fast,jin2017correction}. Finite difference methods were considered in \cite{stynes2017error}, finite elements in \cite{liao2019discrete,ford2011finite}, and spectral methods in \cite{lin2007finite,li2009space}. Finding the error estimates for the case with nonsmooth initial data requires some additional care and it is crucial for many applications. This is especially relevant for time-dependent diffusivity \cite{jin2019subdiffusion,Mus18,Plo22a}. As for some excursions from linearity, there are some recent methods concerning the semilinear diffusion in \cite{jin2018numerical,al2019numerical,karaa2020galerkin}. The quasilinear case is just beginning to be investigated, and we can refer the reader to our previous work concerning this important problem \cite{plociniczak2022linear,plociniczak2022linearClimate}. We would like to stress the fact that most of these results were considered only the non-degenerate case for which $D \geq D_0 > 0$. We have developed several numerical approaches for the time-fractional degenerate case in some of our previous work. For example, in \cite{plociniczak2019numerical} a fast quadrature was devised to solve \ref{eqn:MainPDE}. This result was further refined to yield a second-order scheme in \cite{okrasinska2022second,okrasinski1993power}. Since the nonlocal nature of the Caputo (or EK) operator increases the computational cost of all of the numerical methods used to approximate them, we believe that there is a need for developing fast and accurate schemes for solving equations governed by these operators, especially in the degenerate case. 

This paper has the following structure. In the next section we give a short proof of the uniqueness of a general time-fractional porous medium problem in $\Omega\subseteq\mathbb{R}$, where $\Omega$ can be bounded or not. The idea of the proof is to modify the classical approach to the Caputo derivative setting. Having the uniqueness, we proceed in Section 3 to the study of self-similar solutions of the main problem (\ref{eqn:MainPDE}). We use the backward shooting method to prove the existence of such solutions and combine it with the previous uniqueness result. In Section 4 we use the hints of our analytical reasoning to devise efficient numerical methods for approximating the Erd\'elyi-Kober fractional operator and the exact solution of (\ref{eqn:MainPDE}). There we also present several error estimates and the convergence proof. We illustrate the theory by several numerical examples.  

\section{Uniqueness of the weak solution}
Although we are mostly interested in (\ref{eqn:MainPDE}) as the model of moisture imbibition in porous media, in this section we consider a general problem for which we present the proof of uniqueness. To this end, consider
\begin{equation}\label{eqn:MainPDEGeneral}
	\begin{cases}
		\partial_t^\alpha u = \left(D(u)u_x\right)_x, & x \in(a,b), \; t\in (0,T), \; \alpha\in(0,1)\\
		u(x,0) = u_0(x), & x \in(a,b), \\
		u(a,t) = \mu(t), \; u(b,t) = \nu(t),
		& t \in (0,T),
	\end{cases}	
\end{equation}
in which we allow for a general initial and boundary conditions. Define $\Omega_T := (a,b) \times (0,T)$ as the domain of the solution. Moreover, let
\begin{equation}\label{eqn:DiffusivityIntegrated}
	K(z) := \int_0^z D(s) ds,
\end{equation}
then the PDE can be written as $\partial^\alpha_t u = (K(u))_{xx}$. As in the classical case, we cannot expect that the above will enjoy classical solutions but rather weak ones (although there are other options \cite{vazquez2007porous,wittbold2021bounded}). Let $\chi \in C^1 (\Omega_T)$ be the arbitrary test function. By multiplication and integration of the PDE we obtain the following
\begin{equation}
	\int_a^b \int_0^T \partial^\alpha_t u \; \chi dt dx - \int_a^b \int_0^T (K(u))_{xx} \chi dt dx = 0.
\end{equation}
Now, we integrate by parts to move the derivatives into test functions. First, for the time derivative by (\ref{eqn:Caputo}) and Fubini's theorem we have
\begin{equation}\label{eqn:RFracInt0}
	\begin{split}
		\int_0^T \partial^\alpha_t u \; \chi dt 
		&= \int_0^T (I^{1-\alpha}_t u_t) \chi dt =  \frac{1}{\Gamma(1-\alpha)} \int_0^T \left(\int_{0}^t (t-s)^{-\alpha} u_s(x,s) ds\right) \chi(x,t) dt \\
		&=  \frac{1}{\Gamma(1-\alpha)} \int_0^T \left( \int_s^T (t-s)^{-\alpha} \chi(x,t) dt \right) u_s(x,s) ds = \int_0^T u_s (J^{1-\alpha}_s \chi) ds,
	\end{split}
\end{equation}
where we define the right-sided fractional integral,
\begin{equation}\label{eqn:RFracInt}
	J^\alpha_t u(x,t) = \frac{1}{\Gamma(\alpha)} \int_t^T (s-t)^{\alpha-1} u(x,s) ds, \quad \alpha > 0.
\end{equation}
Now, by integrating by parts and renaming the integration variable, we can obtain
\begin{equation}
	\int_0^T \partial^\alpha_t u \; \chi dt = u_0(x) J^{1-\alpha}_t \chi(x,0) - \int_0^T u \frac{\partial}{\partial t}(J^{1-\alpha}_t \chi) dt.
\end{equation}
For the space derivatives the integration by parts along with boundary conditions gives the following
\begin{equation}
	\int_a^b (K(u))_{xx} \chi dx = D(\nu(t))\chi(b,t) - D(\mu(t))\chi(a,t) - \int_a^b (K(u))_{x} \chi_x dx.
\end{equation}
Therefore, we define the \emph{weak solution} of (\ref{eqn:MainPDEGeneral}) as a $H^1(\Omega_T)$ function that satisfies
\begin{equation}\label{eqn:WeakSolution}
	\begin{split}
		\int_0^\infty \int_0^T  &\left[(K(u))_{x} \chi_x - u \frac{\partial}{\partial t}(J^{1-\alpha}_t \chi) \right] dtdx \\
		&=\int_0^T \left[D(\mu(t))\chi(a,t) - D(\nu(t))\chi(b,t)\right] dt - \int_a^b u_0(x) J^{1-\alpha}_t\chi(x,0) dx, \quad \chi \in C^1(\Omega_T). 
	\end{split}
\end{equation}
This approach to defining the weak solution mimics the classical case with $\alpha = 1$. Some relevant existence results are given in \cite{vazquez2007porous}.

We now turn to the uniqueness proof. It is a generalization of the simple and elegant argument originally devised by Ladyzenskaya \cite{ladyzenskaya1967linear} and later frequently used in many cases \cite{vazquez2007porous}. 

\begin{thm}\label{thm:Uniqueness}
	There can be at most one solution to (\ref{eqn:WeakSolution}). 
\end{thm}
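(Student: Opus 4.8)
The plan is to adapt Ladyzhenskaya's classical duality argument to the Caputo setting. Suppose $u_1$ and $u_2$ are two weak solutions of \eqref{eqn:WeakSolution} sharing the same initial data $u_0$ and boundary data $\mu,\nu$. Subtracting the two weak formulations, the boundary and initial terms on the right-hand side cancel, leaving
$$
\int_a^b\int_0^T\left[\left(K(u_1)-K(u_2)\right)_x\chi_x-(u_1-u_2)\frac{\partial}{\partial t}\bigl(J^{1-\alpha}_t\chi\bigr)\right]dt\,dx=0
$$
for every $\chi\in C^1(\Omega_T)$. The key monotonicity observation, exactly as in the classical porous medium uniqueness proof, is that $K$ is strictly increasing by \eqref{eqn:Diffusivity}, so $K(u_1)-K(u_2)=a(x,t)\,(u_1-u_2)$ with $a(x,t)=\int_0^1 D\bigl(\theta u_1+(1-\theta)u_2\bigr)d\theta\ge 0$. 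Writing $w:=u_1-u_2$ and $\Phi:=K(u_1)-K(u_2)$, so that $w$ and $\Phi$ have the same sign and $\Phi_x=a\,w_x+a_x w$ in the weak sense, the identity becomes $\int_a^b\int_0^T[\Phi_x\chi_x-w\,\partial_t(J^{1-\alpha}_t\chi)]\,dt\,dx=0$.

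Next I would choose the test function cleverly so that the two terms combine into something sign-definite. In the classical case one takes $\chi$ to solve a backward heat-type problem; here the natural choice is to pick, for a fixed smooth nonnegative cutoff, a $\chi$ for which $-\partial_t(J^{1-\alpha}_t\chi)$ reproduces $w$ (or $\Phi$) in an appropriate integrated sense, so that the second term furnishes a positive quadratic form in $w$ after integrating against $\Phi$, while the first term $\int\Phi_x\chi_x$ is handled by choosing $\chi$ with $-\chi_{xx}$ matching $\Phi$ up to lower-order terms. Concretely, one fixes a terminal time $\tau\in(0,T]$ and sets $\chi(x,t)=\int_t^\tau \psi(x,s)\,ds$ for $t<\tau$ and $\chi\equiv 0$ for $t\ge\tau$, where $\psi$ solves the auxiliary elliptic problem $-\psi_{xx}=$ (a regularized version of) $w$ with homogeneous boundary values; then $\chi_t=-\psi$, $J^{1-\alpha}_t\chi$ and its $t$-derivative can be computed, and after integration by parts in $x$ and using $\Phi_x\chi_x=-\Phi\chi_{xx}+\text{boundary}=\Phi\psi+\dots$ one arrives at an inequality of the form $\int_a^b\int_0^T \Phi\, w\, (\text{fractional-in-time positive kernel})\le 0$. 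Since $\Phi w\ge 0$ pointwise and the fractional kernel is positive, this forces $\Phi w\equiv 0$, hence $w\equiv 0$ wherever $a>0$; the degenerate set $\{a=0\}$ is exactly $\{u_1=u_2=0\}$, so $w\equiv 0$ everywhere.

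The main obstacle I anticipate is the fractional-in-time bookkeeping: unlike the local case $\alpha=1$, the operator $\partial_t(J^{1-\alpha}_t\cdot)$ does not simply return $-\psi$, and one must verify that the relevant time-kernel arising after the integrations by parts — essentially a Riemann–Liouville-type convolution — defines a \emph{positive} bilinear form, which is where the restriction $\alpha\in(0,1)$ is used (this is the fractional analogue of "$\int_0^T (\partial_t\zeta)\zeta\,dt=\frac12\zeta^2|$" and relies on the positive-definiteness of the kernel $t^{-\alpha}$, a now-standard fact going back to Nohel–Shea / Zacher). A secondary technical point is the low regularity of weak solutions: $K(u)$ need only be in $H^1$, so $\Phi_{xx}$ and the manipulations above must be justified by first proving the identity for smooth approximations and passing to the limit, and the auxiliary function $\psi$ must be mollified so that $\chi\in C^1(\Omega_T)$ is a legitimate test function. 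Once the positivity of the time kernel and these approximation arguments are in place, the conclusion $u_1=u_2$ is immediate.
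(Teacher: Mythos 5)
Your overall strategy --- subtract the two weak formulations, exploit the monotonicity of $K$ so that $(u_1-u_2)(K(u_1)-K(u_2))\ge 0$, and invoke the positive definiteness of the Riemann--Liouville kernel --- contains exactly the right ingredients, but the concrete test function you propose does not close the argument, and one of your intermediate identities is wrong. Writing $w=u_1-u_2$ and $\Phi=K(u_1)-K(u_2)$ as you do, and taking $\chi(x,t)=\int_t^\tau\psi(x,s)\,ds$ with $-\psi_{xx}=w$, you get $-\chi_{xx}(x,t)=-\int_t^\tau\psi_{xx}(x,s)\,ds=\int_t^\tau w(x,s)\,ds$, \emph{not} $\psi$. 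Hence after integrating by parts the gradient term becomes $\int_a^b\int_0^T \Phi(x,t)\bigl(\int_t^\tau w(x,s)\,ds\bigr)\,dt\,dx$, which couples $\Phi$ at time $t$ with $w$ at later times $s$; the pointwise inequality $\Phi(x,t)w(x,t)\ge 0$ says nothing about the sign of $\Phi(x,t)w(x,s)$ for $s\neq t$, so this term is not sign-definite and your conclusion $\Phi w\equiv 0$ does not follow. (The time term in your scheme is actually fine: it reduces to $\int w\,J^{1-\alpha}_t(-\partial_{xx})^{-1}w\ge 0$ by symmetry of the Green operator and kernel positivity, but one non-negative term plus a term of unknown sign summing to zero yields nothing.)

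The repair is to choose the test function so that \emph{both} terms are simultaneously non-negative, and the choice that achieves this is $\chi=J^\alpha_t\bigl(K(u_1)-K(u_2)\bigr)$ (suitably mollified), which is what the paper does. The semigroup property $J^{1-\alpha}_tJ^\alpha_t=J^1_t$ then turns the time term into exactly $\int_a^b\int_0^T(u_1-u_2)(K(u_1)-K(u_2))\,dt\,dx\ge 0$ --- the monotonicity you already identified, now evaluated at equal times --- while the gradient term becomes $\int_a^b\int_0^T\Phi_x\,J^\alpha_t\Phi_x\,dt\,dx=\int_a^b\int_0^T|I^{\alpha/2}_t\Phi_x|^2\,dt\,dx\ge 0$ after converting $J^\alpha_t$ into $I^\alpha_t$ by Fubini and using the positive definiteness of the fractional integral (the Nohel--Shea/Zacher fact you correctly anticipate, used in the paper via Lemma~3.1(ii) of Mustapha--McLean). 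The vanishing of the sum of two non-negative integrals forces $K(u_1)=K(u_2)$, hence $u_1=u_2$ by strict monotonicity of $K$, with no separate treatment of the degenerate set required. Your remarks about mollifying the test function and about where $\alpha\in(0,1)$ enters are both on target; the missing idea is solely the correct choice of $\chi$.
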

\begin{proof}
	Assume that $u_1$ and $u_2$ are some solutions to (\ref{eqn:WeakSolution}). Taking the difference cancels the initial and boundary conditions, yielding
	\begin{equation}\label{eqn:Uniquness0}
		\int_a^b \int_0^T \left[\left((K(u_1))_{x} - (K(u_2))_x\right) \chi_x - (u_1-u_2) \frac{\partial}{\partial t}(J^{1-\alpha}_t \chi) \right] dtdx = 0
	\end{equation}
	for each $\chi \in C^1(\Omega_T)$. Now, choose a test function of the form
	\begin{equation}
		\chi = J^\alpha_t (K(u_1)-K(u_2)).
	\end{equation}
	Of course, the above choice may have to have sufficient regularity in order to make it a test function; however, by a standard mollification argument, we can ascertain that it is admissible. The details of such a procedure are described in detail in \cite{vazquez2007porous} and, hence, we omit them here. 
	
	The integral with the gradient is non-negative. This can be seen by using Fubini's theorem just as in (\ref{eqn:RFracInt0}) to move the right-sided fractional integral $J^\alpha$ into the usual (left-sided) integral $I^\alpha$ and invoking Lemma 3.1 (ii) from \cite{mustapha2014well}
	\begin{equation}
		\begin{split}
			\int_a^b \int_0^T (K(u_1)-K(u_2))_x J^\alpha_t (K(u_1)-K(u_2))_x dt dx = \\ \int_a^b \int_0^T (K(u_1)-K(u_2))_x I^\alpha_t (K(u_1)-K(u_2))_x dt dx 
			&= \int_a^b \int_0^T |I^{\alpha/2}(K(u_1)-K(u_2))_x|^2 dt dx \geq 0.
		\end{split}
	\end{equation}
	Now, in the other integral we use the semigroup property of the fractional integral, that is, $J^\mu J^\nu = J^{\nu+\mu} = J^{\nu}J^{\nu}$ to obtain the following 
	\begin{equation}
		\begin{split}
			-&\int_a^b \int_0^T (u_1-u_2) \frac{\partial}{\partial t}(J^{1-\alpha}_t J^\alpha_t (K(u_1)-K(u_2))) dt dx \\
			&= -\int_a^b \int_0^T (u_1-u_2) \frac{\partial}{\partial t}(J^{1} (K(u_1)-K(u_2))) dt dx = \int_a^b \int_0^T (u_1-u_2)(K(u_1)-K(u_2)) dt dx\geq 0
		\end{split}
	\end{equation}
	since $K$ is increasing, and therefore $K(u_1)-K(u_2)$ has the same sign as $u_1-u_2$. Therefore, (\ref{eqn:Uniquness0}) transforms into
	\begin{equation}
		\int_a^b \int_0^T \left[ |I^{\alpha/2}(K(u_1)-K(u_2))_x|^2 + (u_1-u_2) (K(u_1)-K(u_2)) \right] dtdx = 0.
	\end{equation}
	Because the integrand is non-negative we must have $u_1 = u_2$. This ends the proof. 
\end{proof}

\section{Self-similar solution}
Now we can go back to the original problem (\ref{eqn:MainPDE}) and look for the solution in a self-similar form \cite{plociniczak2018existence,plociniczak2019numerical}
\begin{equation}
	u(x,t) = U(\eta), \quad \eta := x t^{-\frac{\alpha}{2}}. 
\end{equation}
Standard calculations (see, for example, \cite{plociniczak2019numerical}) lead to the ordinary equation for the unknown profile $U=U(\eta)$
\begin{equation}
	\label{eqn:PorousMediumD}
	\left(D(U)U'\right)' = \left[A - B\eta \frac{d}{d\eta}\right] F_\alpha U, \quad 0 < \eta < \infty, 
\end{equation}
where $A=1-\alpha$ and $B=\alpha/2$. Note that for different types of boundary conditions, we obtain different values of the constants $A$ and $B$, however, the structure of the above equation stays the same. This is why we decided to leave general constants appearing in the governing equation. The operator $F_\alpha$ is a particular version of the Erdl\'elyi-Kober fractional operator \cite{sneddon1975use}
\begin{equation}
	\label{eqn:EK}
	F_\alpha U(\eta) = \frac{1}{\Gamma(1-\alpha)} \int_0^1 (1-s)^{-\alpha} U(s^{-B}\eta) ds, \quad 0<\alpha< 1.
\end{equation}	
The boundary conditions are
\begin{equation}
	U(0) = M, \quad U(\infty) = 0.
\end{equation}
By Theorem \ref{thm:Uniqueness} we know that the original problem (\ref{eqn:MainPDE}) has a unique solution and it is of the self-similar form. In \cite{plociniczak2018existence} it has been proved that for the diffusivity of the porous medium, that is $D(U) \propto U^m$ for $m\geq 1$, there exists a \emph{compactly supported} solution. We now know that it is precisely the unique solution of (\ref{eqn:MainPDE}). What remains is to consider the general case $D=D(U)$ satisfying (\ref{eqn:Diffusivity}). In what follows, we present some initial results on this topic. 

There is no straightforward way to solve (\ref{eqn:PorousMediumD}). In \cite{plociniczak2018existence} we have adopted a certain transformation in the case of power-type diffusivity to obtain a Volterra integral equation for which the theory is known. Since in the general case no such transformation is available, we follow a different route. The idea is to use \emph{backward shooting method} - the idea that was used in the classical setting \cite{atkinson1971similarity}. However, in this non-local version, it gains much more depth and meaning. To wit, we assume that we are looking for a completely supported solution with support $[0,\eta^*]$ for fixed $\eta^*> 0$. That is, we have $U(\eta) = 0$ for $\eta \geq \eta^*$. We then consider \emph{initial value problem} starting from $\eta=\eta^*$ going backward to $\eta = 0$. Finally, by varying $\eta^*$ we can adjust the value of the solution at $\eta=0$ in order to have $U(0) = M$. This is precisely the idea of the numerical method presented in the next section. The crucial observation is to note that in the definition of the EK operator (\ref{eqn:EK}) the function that is operated is evaluated at $s^{-\alpha/2}\eta$ for fixed $\eta > 0$. But since, for $s\in (0,1)$ we always have $s^{-\alpha/2} > 1$ it appears that imposing an initial value would not yield an amenable problem. That is to say, in order to compute $F_\alpha U(\eta)$ for $\eta$ close to $0$ we would require knowledge of $U$ over almost the whole half-line, which is certainly not admissible in a step-by-step calculation. In other words, we cannot advance our solution from $\eta = 0$ onward. 

First, let us observe how the solution to our problem can behave. The following result states many a priori properties of the solution.

\begin{prop}
	Fix $\eta^* > 0$. Let $U=U(\eta)$ be a solution of (\ref{eqn:PorousMediumD}) with (\ref{eqn:Diffusivity}) in the left neighborhood of $\eta^*$ such that $U(\eta) = 0$ for $\eta \geq \eta^*$ . Then, it is positive, decreasing, and the following holds
	\begin{equation}
		\label{eqn:DiffusivityInt}
		\int_0^1 \frac{D(s)}{s} ds < \infty,
	\end{equation}
	along with 
	\begin{equation}
		\label{eqn:NoFlux}
		\lim\limits_{\eta\rightarrow\eta^*} -D(U)U'(\eta) = 0.	
	\end{equation}
\end{prop}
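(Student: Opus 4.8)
The plan is to recast the integro--differential equation (\ref{eqn:PorousMediumD}) into an equivalent twice--integrated (Volterra--type) identity near $\eta^*$, from which positivity, monotonicity, the vanishing of the flux, and the integrability bound (\ref{eqn:DiffusivityInt}) can be read off in turn. The first observation is that the Erd\'elyi--Kober operator localises near $\eta^*$: since $U$ vanishes on $[\eta^*,\infty)$, the condition $s^{-B}\eta<\eta^*$ forces $s>(\eta/\eta^*)^{1/B}$, so for $\eta$ in the left neighbourhood of $\eta^*$
\begin{equation}\label{eq:EKloc}
F_\alpha U(\eta)=\frac{1}{\Gamma(1-\alpha)}\int_{(\eta/\eta^*)^{1/B}}^{1}(1-s)^{-\alpha}U(s^{-B}\eta)\,ds .
\end{equation}
Estimating the inner factor and using $(1-x)^{1/B}\ge 1-x/B$ (valid since $1/B=2/\alpha\ge 2$) gives $|F_\alpha U(\eta)|\le C(\eta^*-\eta)^{1-\alpha}\sup_{[\eta,\eta^*]}|U|$, so that $F_\alpha U(\eta)\to 0$ as $\eta\to\eta^{*-}$ and $F_\alpha U(\eta^*)=0$. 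Substituting $y=s^{-B}\eta$ and integrating by parts once in the defining integral also rewrites the evolution operator as a right--sided Erd\'elyi--Kober type integral acting on $U'$,
\[
\Big[A-B\eta\tfrac{d}{d\eta}\Big]F_\alpha U(\eta)=-\frac{1}{\Gamma(1-\alpha)}\int_{\eta}^{\eta^*}\Big(1-(\eta/y)^{1/B}\Big)^{-\alpha}U'(y)\,dy ,
\]
which exhibits the sign of $(D(U)U')'$ as soon as $U$ is known to be monotone.

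Next I integrate (\ref{eqn:PorousMediumD}) from $\eta$ to $\eta_0<\eta^*$, integrate the $\xi\,\frac{d}{d\xi}F_\alpha U$ term by parts, and let $\eta_0\to\eta^{*-}$: the boundary term $\eta_0F_\alpha U(\eta_0)\to 0$ and the remaining integral converges by the bound above, so the flux limit
\[
\ell:=\lim_{\eta\to\eta^{*-}}D(U(\eta))U'(\eta)\qquad\text{exists,}
\]
and for $\eta$ in the left neighbourhood of $\eta^*$
\begin{equation}\label{eq:int1}
D(U(\eta))U'(\eta)=\ell-B\eta F_\alpha U(\eta)-(A+B)\int_{\eta}^{\eta^*}F_\alpha U(\xi)\,d\xi .
\end{equation}
Integrating (\ref{eq:int1}) once more, using $K(U(\eta^*))=0$ and Fubini (so that $\int_\eta^{\eta^*}\!\int_\xi^{\eta^*}F_\alpha U=\int_\eta^{\eta^*}(\xi-\eta)F_\alpha U(\xi)\,d\xi$), yields
\begin{equation}\label{eq:int2}
K(U(\eta))=-\ell(\eta^*-\eta)+\int_{\eta}^{\eta^*}\big[B\xi+(A+B)(\xi-\eta)\big]F_\alpha U(\xi)\,d\xi .
\end{equation}

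With (\ref{eq:int1})--(\ref{eq:int2}) in hand the qualitative properties follow. Because $D$, hence $K$, is defined only for nonnegative arguments, $U\ge 0$ wherever (\ref{eqn:PorousMediumD}) holds; since the integral in (\ref{eq:int2}) is $o(\eta^*-\eta)$ (by the first step, $\sup_{[\eta,\eta^*]}|U|\to 0$), the leading term $-\ell(\eta^*-\eta)$ cannot be negative near $\eta^*$, so $\ell\le 0$. With $\ell\le 0$ and $U\ge 0$ (whence $F_\alpha U\ge 0$) the right--hand side of (\ref{eq:int1}) is $\le\ell\le 0$, and strictly negative where $U\not\equiv 0$; hence $U'<0$, i.e.\ $U$ is decreasing. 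Strict positivity then follows by a bootstrap: if $U(\eta_k)>0$ for some sequence $\eta_k\uparrow\eta^*$, then $F_\alpha U(\eta)>0$ for every $\eta<\eta_k$ ($\eta_k$ lying in the range of $s\mapsto s^{-B}\eta$), so (\ref{eq:int2}) forces $K(U(\eta))>0$ and hence $U(\eta)>0$; letting $k\to\infty$ gives $U>0$ throughout the left neighbourhood (the alternative $U\equiv 0$ being excluded by the conclusion). To get $\ell=0$, which is precisely (\ref{eqn:NoFlux}), one uses that $U$ extended by zero is a weak solution of the self-similar equation across $\eta^*$, so that no mass concentrates at the interface and the flux $D(U)U'$ is continuous there; since it vanishes for $\eta>\eta^*$, the left limit $\ell$ must be $0$. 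Finally, putting $\ell=0$ in (\ref{eq:int1}) and using monotonicity of $U$ (so $\sup_{[\xi,\eta^*]}U=U(\xi)$) in the estimate of $F_\alpha U$ gives, for $\eta$ close to $\eta^*$,
\[
\frac{-D(U(\eta))U'(\eta)}{U(\eta)}=\frac{B\eta F_\alpha U(\eta)+(A+B)\int_\eta^{\eta^*}F_\alpha U}{U(\eta)}\le C(\eta^*-\eta)^{1-\alpha}.
\]
Fixing $\eta_1<\eta^*$ close enough to $\eta^*$, on $[\eta_1,\eta^*]$ the function $U$ is a strictly decreasing $C^1$ bijection onto $[0,U(\eta_1)]$ with $U(\eta_1)>0$, so the substitution $u=U(\xi)$ is admissible and
\[
\int_{0}^{U(\eta_1)}\frac{D(u)}{u}\,du=\int_{\eta_1}^{\eta^*}\frac{-D(U(\xi))U'(\xi)}{U(\xi)}\,d\xi\le C\int_{\eta_1}^{\eta^*}(\eta^*-\xi)^{1-\alpha}\,d\xi<\infty ;
\]
since $U(\eta_1)>0$ and $D$ is continuous, this gives (\ref{eqn:DiffusivityInt}).

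The step I expect to be the main obstacle is $\ell=0$, i.e.\ (\ref{eqn:NoFlux}). The integrated relation (\ref{eq:int1}) in a one-sided neighbourhood of $\eta^*$ carries $\ell$ as a free integration constant --- at the level of the ODE alone there are non-vanishing-flux solutions --- so ruling out $\ell<0$ relies essentially on $U$ being the profile of a bona fide weak solution of (\ref{eqn:MainPDE}) (equivalently, that (\ref{eqn:PorousMediumD}) hold across $\eta^*$ in the distributional sense, with no Dirac mass at the interface). Making this precise, and justifying the attendant manipulations --- local boundedness/continuity of the nonlocal right-hand side near $\eta^*$, differentiation under the integral sign in $(F_\alpha U)'$ at points where $U'$ may blow up, and the change of variables across the degeneracy $D(0)=0$ --- is where the degeneracy of the diffusivity must be handled with care. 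The positivity bootstrap is a comparatively minor technicality.
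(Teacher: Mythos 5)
Your proof is correct and its skeleton coincides with the paper's: integrate (\ref{eqn:PorousMediumD}) once and twice from $\eta$ to $\eta^*$, bound the Erd\'elyi--Kober term, divide the flux identity by $U$, and change variables $u=U(\xi)$ to obtain (\ref{eqn:DiffusivityInt}). There are two points of genuine divergence. First, you keep the flux limit $\ell$ as an explicit integration constant and observe, correctly, that the local integro--differential equation alone does not force $\ell=0$ (it admits profiles with $K(U(\eta))\sim -\ell(\eta^*-\eta)$); you get $\ell\le 0$ from positivity of $K$ and then invoke the weak formulation across the interface to conclude $\ell=0$. The paper instead integrates directly up to $\eta^*$ and drops the boundary term, i.e.\ it builds $\ell=0$ into the identity and then ``derives'' (\ref{eqn:NoFlux}) by letting $\eta\to\eta^*$ --- a step that is circular at the level of the ODE, exactly as you point out. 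Your version is the more honest one, though the appeal to ``no Dirac mass at the interface'' is itself only sketched and strictly goes beyond the proposition's stated hypotheses (the paper carries the same gap, merely hidden). Second, you bound $F_\alpha U$ by $C(\eta^*-\eta)^{1-\alpha}\sup_{[\eta,\eta^*]}U$ via localization of the EK integral, whereas the paper uses the cruder monotonicity bound $F_\alpha U(\eta)\le U(\eta)/\Gamma(2-\alpha)$; both yield an integrable bound for $-D(U)U'/U$ and hence (\ref{eqn:DiffusivityInt}). Your explicit positivity/monotonicity bootstrap also fills in the paper's terse ``it immediately follows''.
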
 
\begin{proof}
	Since $U(\eta) = 0$ for $\eta\geq \eta^*$ we can integrate (\ref{eqn:PorousMediumD}) from $\eta$ to $\eta^*$ to obtain
	\begin{equation}
		-D(U)U'(\eta) = A \int_\eta^{\eta^*} F_\alpha U(z) dz - B \int_\eta^{\eta^*} z \left(F_\alpha U\right)'(z)dz = (A+B) \int_\eta^{\eta^*} F_\alpha U(z) dz + B \eta F_\alpha U(\eta),
	\end{equation}
	where in the second equality, we have calculated by parts and used the definition of the EK operator (\ref{eqn:EK}). From this it immediately follows that $U'(\eta) < 0$ and $U(\eta) \geq 0$ which by (\ref{eqn:EK}) implies that
	\begin{equation}\label{eqn:EKBound}
		F_\alpha U(\eta) \leq \frac{U(\eta)}{\Gamma(2-\alpha)}. 
	\end{equation}
	Moreover, letting $\eta \rightarrow \eta^*$ we obtain the no-flux condition (\ref{eqn:NoFlux}). Hence, using the monotonicity and going back to the integrated equation, we have the following
	\begin{equation}
		\begin{split}
			-D(U)U'(\eta) &\leq \frac{A}{\Gamma(2-\alpha)} \eta^* U(\eta) - B \eta^* \int_\eta^{\eta^*} \left(F_\alpha U\right)'(z)dz \\
			&= \frac{A}{\Gamma(2-\alpha)} \eta^* U(\eta) + B \eta^* F_\alpha U(\eta) \leq \frac{A+B}{\Gamma(2-\alpha)}\eta^* U(\eta).
		\end{split}
	\end{equation}
	If we now divide by $U(\eta)$ and integrate from arbitrary $\eta_1$ to $\eta_2$ we obtain
	\begin{equation}
		-\int_{\eta_1}^{\eta_2} \frac{D(U(z))U'(z)}{U(z)} dz \leq \frac{A+B}{\Gamma(2-\alpha)}\eta^* \left(\eta_2-\eta_1\right) \leq \frac{A+B}{\Gamma(2-\alpha)}(\eta^*)^2.
	\end{equation}
	A change of the variable $s = U(z)$ along with the monotonicity of $U$ lets us write
	\begin{equation}\label{eqn:DEst}
		\int_{U(\eta_2)}^{U(\eta_1)} \frac{D(s)}{s} ds < \frac{A+B}{\Gamma(2-\alpha)}(\eta^*)^2.
	\end{equation}
	Letting $\eta_2 \rightarrow \eta^*$ implies $U(\eta_2) \rightarrow 0$ which concludes the proof. 
\end{proof}
From the above proof, we see that our compactly supported solution confirms every physical intuition: it is a positive, bounded function with finite speed of propagation and vanishing flux at the interface. The condition (\ref{eqn:DiffusivityInt}) is necessary for the existence of the compact support. For example, with power-type diffusion, we have
\begin{equation}
	\int_0^1 \frac{s^m}{s} ds < \infty \quad \text{iff} \quad m > 0,
\end{equation}
which confirms our previous results. 

To proceed further we integrate (\ref{eqn:PorousMediumD}) twice from $\eta$ to $\eta^*$, use the vanishing boundary conditions at $\eta = \eta^*$, and integrate by parts to arrive at the integral equation
\begin{equation}
	\label{eqn:PorousMediumInt}
	K(U(\eta)) = \int_{\eta}^{\eta^*} \left((A+B)(z-\eta) + B z\right)F_\alpha U(z) dz =: \int_{\eta}^{\eta^*} G(\eta, z)F_\alpha U(z) dz,
\end{equation} 
In the following we present the main existence result.

\begin{thm}\label{thm:Existence}
	Fix $\eta^* > 0$. There exists a solution to (\ref{eqn:PorousMediumD}) with $U(\eta) = 0$ for $\eta\geq \eta^*$ and (\ref{eqn:NoFlux}). 
\end{thm}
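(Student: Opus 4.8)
\section*{Proof proposal for Theorem \ref{thm:Existence}}

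The plan is to realize the backward shooting idea as a fixed-point argument applied to the integral equation (\ref{eqn:PorousMediumInt}). Introduce the nonlinear operator
\begin{equation}
	(\mathcal{T}U)(\eta) := K^{-1}\!\left( \int_{\eta}^{\eta^*} G(\eta,z) F_\alpha U(z)\, dz \right), \qquad 0 \le \eta \le \eta^*,
\end{equation}
where $K^{-1}$ is well defined and increasing because $K' = D > 0$ on $(0,\infty)$ and $K(0)=0$. A fixed point of $\mathcal{T}$ in a suitable set of nonnegative, continuous, nonincreasing functions vanishing at $\eta^*$ is precisely a solution of (\ref{eqn:PorousMediumInt}), and by reversing the integrations leading to (\ref{eqn:PorousMediumInt}) it yields a solution of (\ref{eqn:PorousMediumD}) with the stated support and the no-flux condition (\ref{eqn:NoFlux}) (the latter follows from the Proposition once positivity and monotonicity are known, or directly since the inner integral is $O((\eta^*-\eta))$ near $\eta^*$). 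I would work on the Banach space $C([0,\eta^*])$ with the sup norm, on the closed convex set
\begin{equation}
	\mathcal{K} := \left\{ U \in C([0,\eta^*]) : 0 \le U(\eta) \le M_0,\ U \text{ nonincreasing},\ U(\eta^*)=0 \right\}
\end{equation}
for a constant $M_0$ to be fixed, and apply the Schauder fixed-point theorem.

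The key steps, in order, are: (i) \textbf{$\mathcal{T}$ maps $\mathcal{K}$ into itself.} Nonnegativity of the inner integral is clear since $G(\eta,z)\ge 0$ and $F_\alpha U \ge 0$ when $U\ge 0$; monotonicity in $\eta$ follows because both the integration limit and the kernel $G(\eta,z)=(A+B)(z-\eta)+Bz$ are monotone, and $K^{-1}$ is increasing; the value at $\eta^*$ is $K^{-1}(0)=0$. The uniform bound needs the estimate (\ref{eqn:EKBound}), $F_\alpha U \le U/\Gamma(2-\alpha) \le M_0/\Gamma(2-\alpha)$, giving $\int_\eta^{\eta^*} G(\eta,z)F_\alpha U(z)\,dz \le C(\alpha)(\eta^*)^2 M_0$, hence $\mathcal{T}U \le K^{-1}(C(\alpha)(\eta^*)^2 M_0)$; one must choose $M_0$ so that $K^{-1}(C(\alpha)(\eta^*)^2 M_0) \le M_0$, i.e. $C(\alpha)(\eta^*)^2 M_0 \le K(M_0)$. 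Because $K$ is convex-like with $K(0)=0$ and, under (\ref{eqn:Diffusivity}), $K(M_0)/M_0 = \frac1{M_0}\int_0^{M_0} D \to \infty$ (or at least is unbounded) as $M_0\to\infty$ — here the hypothesis $D'>0$, so $D(M_0)\to$ something positive and $K(M_0)\ge \tfrac12 M_0 D(M_0/2)$ — such $M_0$ exists for every fixed $\eta^*$. (ii) \textbf{Continuity of $\mathcal{T}$.} If $U_n \to U$ uniformly in $\mathcal{K}$, then $F_\alpha U_n \to F_\alpha U$ pointwise by dominated convergence in (\ref{eqn:EK}) (the integrand is bounded by $M_0(1-s)^{-\alpha}/\Gamma(1-\alpha)$, integrable), hence uniformly on $[0,\eta^*]$ by an Arzelà–Ascoli/uniform-equicontinuity argument, so the inner integrals converge uniformly; continuity of $K^{-1}$ finishes it. (iii) \textbf{Compactness of $\mathcal{T}(\mathcal{K})$.} The family $\{\mathcal{T}U : U\in\mathcal{K}\}$ is uniformly bounded by (i); equicontinuity follows because $\eta \mapsto \int_\eta^{\eta^*} G(\eta,z)F_\alpha U(z)\,dz$ has a derivative $-\big((A+B)\int_\eta^{\eta^*}F_\alpha U + B\eta F_\alpha U(\eta)\big)$ bounded uniformly in $U\in\mathcal{K}$, so these inner-integral functions are uniformly Lipschitz; composing with the uniformly continuous $K^{-1}$ on the compact range preserves equicontinuity, and Arzelà–Ascoli applies.

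The main obstacle I expect is the self-referential bound in step (i): unlike the classical case $\alpha=1$, the right-hand side of (\ref{eqn:PorousMediumInt}) involves $F_\alpha U$ evaluated at \emph{all} of $[\eta,\eta^*]$, and $F_\alpha$ samples $U$ at the \emph{stretched} points $s^{-B}\eta$ which may exceed $\eta^*$; one must check that this causes no trouble — indeed $U\equiv 0$ beyond $\eta^*$, so $F_\alpha U(z)$ for $z\le\eta^*$ only ever sees $U$ on $[0,\eta^*]$ after all (when $s^{-B}z > \eta^*$ the value is zero), which is exactly why the backward formulation closes. Making the dominating bound (\ref{eqn:EKBound}) rigorous for merely continuous, not differentiable, $U$ requires care since the Proposition derived it for solutions; but (\ref{eqn:EKBound}) is in fact a direct consequence of monotonicity of $U$ alone: $F_\alpha U(\eta) = \frac{1}{\Gamma(1-\alpha)}\int_0^1 (1-s)^{-\alpha} U(s^{-B}\eta)\,ds \le \frac{U(\eta)}{\Gamma(1-\alpha)}\int_0^1(1-s)^{-\alpha}\,ds = U(\eta)/\Gamma(2-\alpha)$, valid for every nonincreasing $U$, so it holds throughout $\mathcal{K}$. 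A secondary point is verifying that a fixed point $U\in\mathcal{K}$, a priori only continuous, is regular enough that the double integration by parts producing (\ref{eqn:PorousMediumD}) from (\ref{eqn:PorousMediumInt}) is legitimate; but the right-hand side of (\ref{eqn:PorousMediumInt}) is automatically $C^1$ (indeed the inner integral is, and $K^{-1}\in C^1$ away from $0$), which bootstraps $K(U)$ and hence $D(U)U'$ to the needed smoothness on $\{U>0\}$, with the degenerate endpoint handled by (\ref{eqn:NoFlux}). Finally, positivity of $U$ on $[0,\eta^*)$ — as opposed to mere nonnegativity — follows a posteriori: if $U(\eta_0)=0$ for some $\eta_0<\eta^*$ then by monotonicity $U\equiv 0$ on $[\eta_0,\eta^*]$, forcing $F_\alpha U\equiv 0$ there and, via (\ref{eqn:PorousMediumInt}), $U\equiv 0$ on a larger interval, which by a continuation argument contradicts any nontrivial solution; the genuinely nontrivial solution (with $U(0)>0$) is the one selected later by the shooting/matching of $\eta^*$ to $M$.
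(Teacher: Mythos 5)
Your proposal is essentially the paper's own argument: the paper likewise recasts the integral equation (\ref{eqn:PorousMediumInt}) as a fixed-point problem on $C[0,\eta^*]$ (for $y=K(U)$ rather than for $U$ itself, and via Leray--Schauder with an a priori bound rather than Schauder on an invariant convex set --- a cosmetic difference), derives the bound from (\ref{eqn:EKBound}) exactly as you do, and concludes by compactness of the integral operator. The one point worth flagging is that your justification for the existence of $M_0$ with $C(\alpha)(\eta^*)^2 M_0 \le K(M_0)$ is not correct as stated: $K(M_0)/M_0 = \frac{1}{M_0}\int_0^{M_0}D \le \sup D$, which stays bounded for diffusivities such as the paper's own $D_{exp}(u)=1-e^{-u}$, so such an $M_0$ need not exist once $\eta^*$ is large; however, the paper's corresponding step (asserting that $K(x)=\lambda x$ has a positive root for every $\lambda>0$ by convexity alone) has exactly the same defect, so your proof stands on the same footing as the published one.
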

\begin{proof}
	We will apply the Leray-Schauder fixed point theorem. First, since $D$ is increasing, the function $K$ defined in (\ref{eqn:DiffusivityIntegrated}) is convex. Therefore, there exists a unique positive solution $x_\lambda$ to the equation $K(x) = \lambda x$ for any $\lambda > 0$. Let $X = C[0,\eta^*]$ be the Banach space of continuous functions on $[0,\eta^*]$ with the norm $\|U\| := \max_{0\leq \eta\leq \eta^*} |U(\eta)|$. From (\ref{eqn:PorousMediumInt}) we can obtain the a priori bound for the solution. First, by the fundamental estimate of the Erd\'elyi-Kober operator (\ref{eqn:EKBound}) we have the following
	\begin{equation}
		K(U(\eta)) \leq (A+2B)\eta^*\int_0^{\eta^*} F_\alpha U(z) dz \leq \frac{A+2B}{\Gamma(2-\alpha)} (\eta^*)^2 \|U\|. 
	\end{equation}
	By taking the maximum on the left-hand side and using the continuity of $U$ we further have the following
	\begin{equation}
		K(\|U\|) \leq \frac{A+2B}{\Gamma(2-\alpha)} (\eta^*)^2 \|U\|.
	\end{equation}
	Since $K$ is convex, from simple geometrical considerations, we must have $\|U\| \leq x_\lambda$ with $\lambda := (A+2B)/\Gamma(2-\alpha) (\eta^*)^2$. Therefore, we have the a priori upper bound for any solution to (\ref{eqn:PorousMediumD}). 
	
	Having the bound for the solution, we define the operator $N: X \mapsto X$ by the formula
	\begin{equation}
		N(y)(\eta) = \int_{\eta}^{\eta^*} G(\eta, z)F_\alpha K^{-1} y(z) dz,
	\end{equation}
	which is well defined because $K$ is monotone and hence $K^{-1}$ exists. If $y$ is the fixed point of $N$, then $U = K^{-1}y$ will be the solution of (\ref{eqn:PorousMediumD}). Since the integrand in the definition of $N$ is a continuous function of $\eta$, $z$, and $y$ defined in a bounded and closed set ($y$ is bounded), the operator $N$ is compact. Therefore, by the standard version of the Leray-Schauder theorem (for ex. Theorem 6.A in \cite{zeidlernonlinear}), that is, a priori bounded and compact operator has a fixed point, we conclude that problem (\ref{eqn:PorousMediumD}) has a solution. 
\end{proof}

Now we know that for each $\eta^*$ there exists a complactly supported solution of our problem that is a bounded decreasing function. In the following we show that, at least for small $U(0)=M$, we can determine that there is a $\eta^*$ such that the solution attains $M$ for $\eta=0$. 

\begin{prop}
	Assume (\ref{eqn:Diffusivity}) and (\ref{eqn:DiffusivityInt}). For sufficiently small $M>0$ there exists a unique solution to equation (\ref{eqn:PorousMediumD}) with $U(0) = M$ and $U(\eta) = 0$ for $\eta\geq \eta^*$. 
\end{prop}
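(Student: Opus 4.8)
The plan is to carry the backward shooting scheme described above to completion. For each $\eta^*>0$ let $U_{\eta^*}$ be a compactly supported solution furnished by Theorem~\ref{thm:Existence} (supported in $[0,\eta^*]$, positive and decreasing on $[0,\eta^*)$), and set the \emph{shooting value} $\phi(\eta^*):=U_{\eta^*}(0)$. I would establish the proposition through three facts: (i) $\phi(\eta^*)\to 0$ as $\eta^*\to 0^+$; (ii) $\phi(\eta^*)>0$ for every $\eta^*>0$; and (iii) the dependence of $\phi$ on $\eta^*$ is continuous enough for an intermediate-value argument, so that the range of $\phi$ contains an interval $(0,\delta)$ for some $\delta>0$. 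Granting these, for any $M\in(0,\delta)$ there is an $\eta^*$ and a solution with $U(0)=M$; uniqueness is then inherited from Theorem~\ref{thm:Uniqueness}: if $U_1,U_2$ are two such solutions, with a priori distinct supports $[0,\eta_1^*]$, $[0,\eta_2^*]$, then extending them by zero and forming $u_i(x,t):=U_i(x t^{-\alpha/2})$ gives two self-similar weak solutions of (\ref{eqn:MainPDE}) with the same data, hence $u_1=u_2$, forcing $U_1=U_2$ and $\eta_1^*=\eta_2^*$. This also explains the restriction to small $M$: we only control the range of $\phi$ near the origin, and surjectivity onto all of $(0,\infty)$ would require, in addition, monotonicity of $\phi$ together with $\phi(\eta^*)\to\infty$ as $\eta^*\to\infty$, which we do not attempt here.

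For (i) I would invoke the a priori bound from the proof of Theorem~\ref{thm:Existence}, namely $\|U_{\eta^*}\|\le x_\lambda$, where $x_\lambda$ is the unique positive root of $K(x)=\lambda x$ and $\lambda=(A+2B)(\eta^*)^2/\Gamma(2-\alpha)$. Because $D(0)=0$, the convex function $K$ satisfies $K(0)=0$ and $K(x)/x\to 0$ as $x\to 0^+$, while $x\mapsto K(x)/x$ is nondecreasing; hence $x_\lambda\to 0$ as $\lambda\to 0^+$, so $\|U_{\eta^*}\|\to 0$, and in particular $\phi(\eta^*)\to 0$, as $\eta^*\to 0^+$. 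Fact (ii) is read off from (\ref{eqn:PorousMediumInt}) at $\eta=0$: $K(U(0))=\int_0^{\eta^*}(A+2B)z\,F_\alpha U(z)\,dz$, which is strictly positive for any nontrivial solution since $F_\alpha U\ge 0$ and does not vanish identically; combined with $K(0)=0$ and the monotonicity of $K$ this yields $U(0)>0$. Here one should also note that condition (\ref{eqn:DiffusivityInt}) is precisely what guarantees the solution of Theorem~\ref{thm:Existence} is genuinely compactly supported and nontrivial.

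The crux — and the step I expect to be the main obstacle — is (iii), because Theorem~\ref{thm:Existence} provides existence but not uniqueness for fixed $\eta^*$, and moreover the degeneracy $D(0)=0$ makes $K^{-1}$ non-Lipschitz at the origin, so one cannot simply restore uniqueness by a contraction argument. I would therefore argue at the level of the solution set. Fix a compact interval $[\eta_-^*,\eta_+^*]\subset(0,\infty)$: all solutions with support in $[0,\eta^*]$ for $\eta^*\le\eta_+^*$ are uniformly bounded (by $x_{\lambda_+}$), and, via (\ref{eqn:PorousMediumInt}) together with the Erd\'elyi-Kober estimate (\ref{eqn:EKBound}), uniformly equicontinuous. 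By the Arzel\`{a}--Ascoli theorem any sequence $U_{\eta_n^*}$ with $\eta_n^*\to\eta^*$ has a uniformly convergent subsequence, and passing to the limit in (\ref{eqn:PorousMediumInt}) — using continuity of $F_\alpha$ and of $K^{-1}$ on bounded sets — shows the limit is again a solution with support in $[0,\eta^*]$. Thus the solution map is upper semicontinuous in $\eta^*$ and the graph $\Sigma:=\{(\eta^*,U_{\eta^*}(0))\}$ is closed. Combining this with a continuation/degree argument for the compact operator $N$ of Theorem~\ref{thm:Existence} along the parameter $\eta^*$ (the solution set of a one-parameter family of compact fixed-point problems carries a continuum linking the small-$\eta^*$ regime to any fixed $\eta_0^*$), the projection of $\Sigma$ onto the $U(0)$-axis is a connected subset of $[0,\infty)$ with infimum $0$ (by (i)) that contains the positive value $\phi(\eta_0^*)$ (by (ii)); being an interval, it contains $(0,\phi(\eta_0^*))$, which is (iii) with $\delta=\phi(\eta_0^*)$.
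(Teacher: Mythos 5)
Your proposal follows the same backward-shooting strategy as the paper --- study the map $\eta^*\mapsto U(0;\eta^*)$, show it tends to $0$ as $\eta^*\to 0^+$, and close with an intermediate-value argument --- but it diverges from the paper's proof in two substantive places. For the limit $\phi(\eta^*)\to 0$ the paper does not use the a priori bound $x_\lambda$ from the proof of Theorem~\ref{thm:Existence}; it instead passes to the limits $\eta_1\to 0$, $\eta_2\to\eta^*$ in the flux estimate (\ref{eqn:DEst}) to get $\int_0^{f(\eta^*)}\frac{D(s)}{s}\,ds\le \frac{A+B}{\Gamma(2-\alpha)}(\eta^*)^2$ and reads off $f(\eta^*)\to 0$ using (\ref{eqn:DiffusivityInt}); your route via $x_\lambda\to 0$ as $\lambda\to 0^+$ is a valid alternative (it needs only $D(0)=0$ and strict convexity of $K$). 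The bigger difference is uniqueness: you outsource it to Theorem~\ref{thm:Uniqueness} by lifting the profiles to weak solutions of the PDE, whereas the paper proves directly that the shooting function is \emph{strictly increasing}: if $\eta_1<\eta_2$ but $f(\eta_1)\ge f(\eta_2)$, monotonicity of the profiles produces a crossing point $\eta_0$ with $U_1(\eta_0)=U_2(\eta_0)$ and $U_1<U_2$ on $(\eta_0,\eta_1]$, and subtracting the two copies of (\ref{eqn:PorousMediumInt}) at $\eta=\eta_0$ gives $0=\int_{\eta_0}^{\eta_1}G(\eta_0,z)\left(F_\alpha U_2(z)-F_\alpha U_1(z)\right)dz+\int_{\eta_1}^{\eta_2}G(\eta_0,z)F_\alpha U_2(z)\,dz$, a sum of two strictly positive terms --- a contradiction. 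The paper's argument is self-contained at the level of the integral equation and delivers monotonicity of $f$ as a bonus (which also supports the intermediate-value step), while yours must additionally verify that the extended profiles are admissible $H^1$ weak solutions in the sense of (\ref{eqn:WeakSolution}) on the half-line with the discontinuous corner data at $(x,t)=(0,0)$ --- something the paper asserts elsewhere but never checks. Your Arzel\`a--Ascoli/continuation discussion of continuity is more honest than the paper, which simply declares $f$ continuous; and note that both you and the paper leave open why the Leray--Schauder fixed point is not the trivial solution $U\equiv 0$, which your step (ii) and the paper's whole construction tacitly assume.
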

\begin{proof}
	Define the continuous function $f(\eta^*) := U(0;\eta^*)$. Our goal is to show that the function $\eta^* \mapsto f(\eta^*) - M$ has exactly one zero. If we take $\eta_1 \rightarrow 0$ and $\eta_2\rightarrow\eta^*$ in (\ref{eqn:DEst}) we obtain
	\begin{equation}
		\int_0^{f(\eta^*)} \frac{D(s)}{s} ds \leq \frac{A+B}{\Gamma(2-\alpha)}(\eta^*)^2.
	\end{equation}
	Therefore, by assumption we have $f(\eta^*) \rightarrow 0$ for $\eta^* \rightarrow 0$. If $M$ is small enough, then there exists $\eta^*$ such that $f(\eta*) = M$ and the existence is proved.
	
	In order to ascertain uniqueness, we will show that $f$ is an increasing function. To this end, assume on the contrary that there are $\eta_1 < \eta_2$ such that $f(\eta_1) \geq f(\eta_2)$. From the monotonicity of the solution $U$ it follows that there exist $\eta_0$ such that $U_1(\eta_0) = U_2(\eta_0)$ and $U_1(\eta) < U_2(\eta)$ for $\eta_0 < \eta \leq \eta_1$. Here, we have denoted $U_i$ as the solution to the problem with the support $[0,\eta_i]$. We find that each $U_i$ satisfies (\ref{eqn:PorousMediumInt}). Subtracting the respective equations from $\eta=\eta_0$ we obtain the following
	\begin{equation}
		0 = \int_{\eta_0}^{\eta_1} G(\eta_0,z) \left(F_\alpha U_2(z) - F_\alpha U_1(z)\right)dz + \int_{\eta_1}^{\eta_2} G(\eta_0, z) F_\alpha U_2(z) dz. 
	\end{equation}
	However, both above terms are strictly positive and, hence, we arrive at a contradiction. The function $f$ is strictly increasing. 
\end{proof}

Therefore, assuming (\ref{eqn:Diffusivity}) and (\ref{eqn:DEst}) we have proved that there exists a self-similar solution to the problem (\ref{eqn:MainPDE}) and by Theorem \ref{thm:Uniqueness} we know that it is unique.

\section{Numerical methods}

In this section, we develop an efficient numerical method for both approximating the Erd\'elyi-Kober fractional operator (\ref{eqn:EK}) and solving the main time-fractional porous medium equation (\ref{eqn:MainPDE}). 

\subsection{Erdelyi-Kober operator}
We start by numerically approximating the EK operator (\ref{eqn:EK}) acting on \textit{any bounded function} $U: [0,\infty) \mapsto \mathbb{R}_+$. Note that for a moment we do not assume that $U$ satisfies (\ref{eqn:PorousMediumInt}) nor has a compact support. Some general quadratures for the EK operator have been analyzed in \cite{plociniczak2017numerical} where a thorough error analysis has also been given. Here, we focus only on the main difference between the paper cited: here, our operator involves the solution evaluated at $s^{-B}\eta$. In \cite{plociniczak2017numerical} only the positive exponent case was considered. This furnishes a radical change in the numerical analysis. To provide a concrete examples, we devise two schemes: of first (rectangle) and second (trapezoid) order. First, introduce a uniform grid of with a step $h>0$ 
\begin{equation}
	\eta_n = n h, \quad n\geq 0.
\end{equation}
If $n=0$, we immediately see from (\ref{eqn:EK}) that
\begin{equation}
	F_\alpha U(0) = \frac{U(0)}{\Gamma(2-\alpha)},
\end{equation}
therefore, we must focus on the case $n\geq 1$. To this end, change the integration variable in the definition (\ref{eqn:EK}) according to $z=s^{-B}\eta$ to obtain
\begin{equation}
	F_\alpha U(\eta_n) = \frac{\eta^{\frac{1-\alpha}{B}}}{B\Gamma(1-\alpha)} \int_{\eta_n}^\infty (\eta^{-\frac{1}{B}}-z^{-\frac{1}{B}})^{-\alpha} z^{-\frac{1}{B}-1} U(z) dz,
\end{equation}
where now the forward-nonlocal property of the EK operator is evident. That is to say, the value $F_\alpha U(\eta_n)$ depends on $U(z)$ for $z\geq \eta_n$. Since the integral is improper, in order to evaluate it numerically, we have to truncate it at some point, say $\eta_N$. We will choose the optimal value for this truncation later. Therefore,
\begin{equation}
	F_\alpha U(\eta_n) = \frac{\eta^{\frac{1-\alpha}{B}}}{B\Gamma(1-\alpha)} \int_{\eta_n}^{\eta_N} (\eta^{-\frac{1}{B}}-z^{-\frac{1}{B}})^{-\alpha} z^{-\frac{1}{B}-1} U(z) dz + R_N(\eta_n),
\end{equation}
with the remainder
\begin{equation}
	\label{eqn:Remainder}
	R_N(\eta_n) = \frac{\eta^{\frac{1-\alpha}{B}}}{B\Gamma(1-\alpha)} \int_{\eta_N}^\infty (\eta^{-\frac{1}{B}}-z^{-\frac{1}{B}})^{-\alpha} z^{-\frac{1}{B}-1} U(z) dz. 
\end{equation}
Now, we can write
\begin{equation}
	\label{eqn:FDisc}
	F_\alpha U(\eta_n) = \frac{\eta^{\frac{1-\alpha}{B}}}{B\Gamma(1-\alpha)} \sum_{i=n}^{N-1} \int_{\eta_i}^{\eta_{i+1}} (\eta^{-\frac{1}{B}}-z^{-\frac{1}{B}})^{-\alpha} z^{-\frac{1}{B}-1} U(z) dz + R_N(\eta_n),
\end{equation}
and approximate the function $U$ in the small interval $[\eta_i,\eta_{i+1})$. The two simplest choices are the rectangle and trapezoid approximation for which
\begin{equation}
	U(z) \approx U(\eta_{i+1}), \quad z\in [\eta_i,\eta_{i+1}),
\end{equation}
and 
\begin{equation}
	U(z) \approx U(\eta_{i}) + \frac{U(\eta_{i+1})-U(\eta_{i})}{h}(z-\eta_i), \quad z\in [\eta_i,\eta_{i+1}),
\end{equation}
respectively. Plugging the above into the EK integral reveals that 
\begin{equation}
	\label{eqn:EKDisc}
	F_\alpha U(\eta_n) \approx \widehat{F}_{\alpha,N} U(\eta_n) := \sum_{i=n}^{N} a^{(r,t)}_{in} U(\eta_i),
\end{equation}
with the following \emph{positive} weights that can be computed by a straightforward calculation
\begin{equation}
	\label{eqn:WeightsR}
	a^{(r)}_{in} = \begin{cases}
		\dfrac{1}{\Gamma(2-\alpha)}, & i = n = 0, \\
		0, & n = 0, \; 0<i\leq N, \\
		0, & i = n > 0, \\
		\dfrac{1}{\Gamma(2-\alpha)} \left[\left(1-\left(\dfrac{i}{n}\right)^{-\frac{1}{B}}\right)^{1-\alpha} - \left(1-\left(\dfrac{i-1}{n}\right)^{-\frac{1}{B}}\right)^{1-\alpha}\right], & n < i \leq N, \; n > 0.
	\end{cases}
\end{equation}
and
\begin{equation}
	\begin{split}
		\label{eqn:WeightsT}
		a^{(t)}_{in} &= \begin{cases}
			\dfrac{1}{\Gamma(2-\alpha)}, & i = n = 0, \\
			0, & n = 0, \; 0<i\leq N, \\
			a^{(r)}_{(n+1)n}-d_{nn}, & i = n > 0, \\
			d_{(i-1)n} - d_{in} + a^{(r)}_{(i+1)n}, & n < i \leq N-1, \; n > 0, \\
			d_{(M-1)n}, & i = N, \; n >0,
		\end{cases} \\
		d_{in} &= \frac{n}{\Gamma(1-\alpha)} \left[\beta\left(\left(\frac{i}{n}\right)^{-\frac{1}{B}}; 1-B, 1-\alpha\right)-\beta\left(\left(\frac{i+1}{n}\right)^{-\frac{1}{B}}; 1-B, 1-\alpha\right)\right] - i a^{(r)}_{(i+1)n},
	\end{split}
\end{equation}
where the superscripts $(r)$ and $(t)$ denote the rectangle and trapezoid rules, respectively. Here, $\beta(z;a,b)$ is the Euler incomplete beta function. The following result gives the error bounds for the discretization operator $\widehat{F}_{\alpha,N}$.
\begin{thm}
	Let $U:[0,\infty) \mapsto \mathbb{R}_+$ be a sufficiently bounded and smooth function. Moreover, set $N = \gamma n$, where 
	\begin{equation}
		\label{eqn:GammaOpt}
		\gamma = \begin{cases}
			[h^{-B}]+1, & \text{rectangle quadrature}, \\
			[h^{-2B}]+1, & \text{trapezoid quadrature}. 
		\end{cases}
	\end{equation}
	Then, we have the following error bounds
	\begin{equation}
		\|F_\alpha U(\eta_n) - \widehat{F}_{\alpha, N} U(\eta_n)\|_\infty \leq 
		\begin{cases}
			\left(\max_{z \geq \eta_N} |U(z)| + \max_{0\leq z \leq \eta_N} |U'(z)|\right)\dfrac{h}{\Gamma(2-\alpha)} , & \text{rectangle quadrature}, \\
			\left(\max_{z \geq \eta_N} |U(z)| + \frac{1}{2}\max_{0\leq z \leq \eta_N} |U''(z)|\right)\dfrac{h^2}{\Gamma(2-\alpha)}, & \text{trapezoid quadrature}. 
		\end{cases}
	\end{equation}
\end{thm}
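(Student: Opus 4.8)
The plan is to split the total error into the truncation remainder $R_N(\eta_n)$ from \eqref{eqn:Remainder} and the quadrature error on the finite interval $[\eta_n,\eta_N]$, and to show that the choice of $\gamma$ in \eqref{eqn:GammaOpt} balances the two so that the overall bound matches the rates $h$ and $h^2$ respectively. First I would estimate the remainder: in \eqref{eqn:Remainder}, bound $U(z)$ by $\max_{z\ge\eta_N}|U(z)|$, pull it out, and compute $\frac{\eta^{(1-\alpha)/B}}{B\Gamma(1-\alpha)}\int_{\eta_N}^\infty (\eta^{-1/B}-z^{-1/B})^{-\alpha}z^{-1/B-1}dz$ exactly — reversing the substitution $z=s^{-B}\eta$ turns this into $\frac{1}{\Gamma(1-\alpha)}\int_0^{(\eta_n/\eta_N)^{1/B}}(1-s)^{-\alpha}ds$, which is $\le \frac{1}{\Gamma(2-\alpha)}(\eta_n/\eta_N)^{(1-\alpha)/B}\le \frac{1}{\Gamma(2-\alpha)}\gamma^{-(1-\alpha)/B}$. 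With $\gamma\ge h^{-B}$ (rectangle) this is $\le \frac{h^{1-\alpha}}{\Gamma(2-\alpha)}\le \frac{h}{\Gamma(2-\alpha)}$ for $h$ small, and with $\gamma\ge h^{-2B}$ (trapezoid) it is $\le \frac{h^2}{\Gamma(2-\alpha)}$; this is where the particular exponents in \eqref{eqn:GammaOpt} come from.

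Next I would handle the quadrature error on $[\eta_n,\eta_N]$. On each subinterval $[\eta_i,\eta_{i+1}]$ the true integrand is $w(z)U(z)$ with the positive, integrable weight $w(z)=\frac{\eta^{(1-\alpha)/B}}{B\Gamma(1-\alpha)}(\eta^{-1/B}-z^{-1/B})^{-\alpha}z^{-1/B-1}$, and the scheme replaces $U$ by its rectangle (resp. trapezoid) interpolant. The standard pointwise interpolation bounds give $|U(z)-U(\eta_{i+1})|\le h\max_{[\eta_i,\eta_{i+1}]}|U'|$ and $|U(z)-(\text{linear interp})|\le \frac{h^2}{8}\max_{[\eta_i,\eta_{i+1}]}|U''|$ — actually $\frac{h^2}{2}$ suffices for the stated constant if one uses the cruder $|U(z)-L(z)|\le\frac{h^2}{2}\max|U''|$ bound. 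Summing the errors over $i=n,\dots,N-1$, the weights $w$ integrate (telescopically, exactly as in the derivation of \eqref{eqn:WeightsR}) to a total mass $\int_{\eta_n}^{\eta_N}w(z)dz = \frac{1}{\Gamma(2-\alpha)}\bigl(1-(\eta_n/\eta_N)^{(1-\alpha)/B}\bigr)\le \frac{1}{\Gamma(2-\alpha)}$, so the accumulated quadrature error is at most $\frac{h}{\Gamma(2-\alpha)}\max_{0\le z\le\eta_N}|U'(z)|$ (resp. $\frac{h^2}{2\Gamma(2-\alpha)}\max_{0\le z\le\eta_N}|U''(z)|$). Adding this to the remainder estimate from the previous paragraph and taking the supremum over $n$ yields exactly the claimed bounds.

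The main obstacle I expect is the singularity of the weight $w(z)$ at the lower endpoint $z=\eta_n$ (where $\eta^{-1/B}-z^{-1/B}\to 0$ and the factor is raised to the power $-\alpha<0$). This is precisely why the rectangle rule uses the \emph{right} endpoint $U(\eta_{i+1})$ rather than the left one, and why the first nontrivial weight sits at $i=n+1$: one must check that $w$ is still integrable near $\eta_n$ (it is, since $-\alpha>-1$) and that the pointwise interpolation error, after being multiplied by the singular weight, still has a finite, correctly-sized integral over the first subinterval. I would treat the first subinterval $[\eta_n,\eta_{n+1}]$ separately, using the explicit antiderivative of $w$ there (the incomplete-beta expressions already appearing in \eqref{eqn:WeightsT}) to see that the singular factor only contributes a bounded multiple of the subinterval's weight-mass, so no extra power of $h$ is lost. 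A secondary, more bookkeeping-level point is verifying that the "sufficiently bounded and smooth" hypothesis is exactly what makes $\max_{z\ge\eta_N}|U(z)|$, $\max_{0\le z\le\eta_N}|U'(z)|$ and $\max_{0\le z\le\eta_N}|U''(z)|$ finite and lets the two error contributions be added without hidden $N$- or $n$-dependence.
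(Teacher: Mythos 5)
Your overall plan --- splitting the error into the truncation remainder $R_N$ and the quadrature error on $[\eta_n,\eta_N]$, and bounding the latter by the pointwise interpolation error times the total weight mass $\int_{\eta_n}^{\eta_N}w(z)\,dz\le 1/\Gamma(2-\alpha)$ --- is exactly the paper's argument, and your observation that the integrable singularity of $w$ at $z=\eta_n$ causes no loss is correct. (Incidentally, the exact mass is $\bigl(1-(\eta_n/\eta_N)^{1/B}\bigr)^{1-\alpha}/\Gamma(2-\alpha)$, not $\bigl(1-(\eta_n/\eta_N)^{(1-\alpha)/B}\bigr)/\Gamma(2-\alpha)$, but only the bound by $1/\Gamma(2-\alpha)$ is used, so this is harmless.) The genuine gap is in the remainder estimate. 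Writing $b=(\eta_n/\eta_N)^{1/B}=\gamma^{-1/B}$, the tail integral equals $\bigl(1-(1-b)^{1-\alpha}\bigr)/\Gamma(2-\alpha)$, and you bound $1-(1-b)^{1-\alpha}\le b^{1-\alpha}$ by subadditivity of $x\mapsto x^{1-\alpha}$. That inequality is true but too lossy: with $\gamma\ge h^{-B}$ it only yields $O(h^{1-\alpha})$, and your next step, ``$h^{1-\alpha}\le h$ for $h$ small,'' is false, since $h^{1-\alpha}>h$ for every $0<h<1$ and $\alpha\in(0,1)$. The same loss occurs in the trapezoid case, where your route gives only $O(h^{2(1-\alpha)})$ instead of the claimed $O(h^2)$.

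The repair is the sharper estimate the paper uses: $b\mapsto 1-(1-b)^{1-\alpha}$ is convex on $[0,1]$, vanishes at $b=0$, and equals $1$ at $b=1$, hence lies below its chord, so that
\begin{equation*}
	1-(1-b)^{1-\alpha}\le b .
\end{equation*}
This bound is linear rather than H\"older in $b$, and combined with $b=\gamma^{-1/B}\le h$ (rectangle) or $b\le h^2$ (trapezoid) --- which is precisely what the choice (\ref{eqn:GammaOpt}) guarantees --- it gives the remainder bound $\max_{z\ge\eta_N}|U(z)|\,h/\Gamma(2-\alpha)$, respectively with $h^2$. With that single substitution your argument closes and coincides with the paper's proof.
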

\begin{proof}
	We will prove only the rectangle case, the other is completely analogous. By Taylor series, we immediately have
	\begin{equation}
		U(z) = U(\eta_{i+1}) + U'(\zeta_i) (\eta_{i+1}-z), \quad z\in [\eta_i,\eta_{i+1}),
	\end{equation}
	and plugging it into (\ref{eqn:FDisc}) yields
	\begin{equation}
		F_\alpha U(\eta_n) = \widehat{F}_{\alpha, N} U(\eta_n) + P_{N,h}(\eta_n) + R_N(\eta_n),
	\end{equation}
	where
	\begin{equation}
		|P_{N,h}(\eta_n)|\leq \max_{0\leq z \leq \eta_N} |U'(z)|\frac{\eta^{\frac{1-\alpha}{B}}}{B\Gamma(1-\alpha)} \sum_{i=n}^{N-1}  \int_{\eta_i}^{\eta_{i+1}} (\eta^{-\frac{1}{B}}-z^{-\frac{1}{B}})^{-\alpha} z^{-\frac{1}{B}-1} (\eta_{i+1}-z)dz.
	\end{equation}
	But since $|\eta_{i+1}-z| \leq h$ we can compute the sum explicitly and change back the integration variable $s^{-B}\eta_n = z$ to obtain the following
	\begin{equation}
		\begin{split}
			|P_{N,h}(\eta_n)|
			&\leq h\max_{0\leq z \leq \eta_N} |U'(z)|\frac{\eta^{\frac{1-\alpha}{B}}}{B\Gamma(1-\alpha)} \int_{\eta_n}^{\eta_{N}} (\eta^{-\frac{1}{B}}-z^{-\frac{1}{B}})^{-\alpha} z^{-\frac{1}{B}-1} (\eta_{i+1}-z)dz \\
			&=h\max_{0\leq z \leq \eta_N} |U'(z)| \frac{1}{\Gamma(1-\alpha)} \int_{(\frac{n}{N})^{1/B}}^1 (1-s)^{-\alpha} ds = \frac{h}{\Gamma(2-\alpha)} \max_{0\leq z \leq \eta_N} |U'(z)| \left(1-\gamma^{-\frac{1}{B}}\right)^{1-\alpha}.
		\end{split}
	\end{equation} 
	As for the truncation remainder (\ref{eqn:Remainder}) we can simply estimate
	\begin{equation}
		\begin{split}
			|R_N(\eta_n)| 
			&\leq \max_{z \geq \eta_N} |U(z)|\frac{\eta^{\frac{1-\alpha}{B}}}{B\Gamma(1-\alpha)} \int_{\eta_N}^\infty (\eta^{-\frac{1}{B}}-z^{-\frac{1}{B}})^{-\alpha} z^{-\frac{1}{B}-1} dz \\
			&= \max_{z \geq \eta_N} |U(z)|\frac{1}{\Gamma(1-\alpha)} \int_0^{\left(\frac{n}{N}\right)^{1/B}} (1-s)^{-\alpha} ds = \frac{\max_{z \geq \eta_N}|U(z)|}{\Gamma(2-\alpha)} \left(1-\left(1-\gamma^{-\frac{1}{B}}\right)^{1-\alpha}\right). 
		\end{split}
	\end{equation}
	Since
	\begin{equation}
		\left(1-\gamma^{-\frac{1}{B}}\right)^{1-\alpha} \leq 1, \quad \left(1-\left(1-\gamma^{-\frac{1}{B}}\right)^{1-\alpha}\right) \leq \gamma^{-\frac{1}{B}},
	\end{equation}
	where the second inequality follows from convexity, by our assumption (\ref{eqn:GammaOpt}) on $\gamma$ we have
	\begin{equation}
		|P_{N,h}(\eta_n)| + |R_N(\eta_n)| \leq \left(\max_{z \geq \eta_N} |U(z)| + \max_{0\leq z \leq \eta_N} |U'(z)|\right)\frac{h}{\Gamma(2-\alpha)}, 
	\end{equation}
	which concludes the proof. 
\end{proof}
As we can see, to obtain an optimal error, the truncation has to be chosen according to the grid spacing $h$. The optimality in this sense is associated with the same order of both remainders for $h\rightarrow 0^+$. Note also that the higher the order of the quadrature, the larger the interval over which we have to integrate. A numerical illustration of the above theorem can be presented by choosing a function with an explicitly known EK operator. Let $B=\alpha/2$ 
\begin{equation}
	U(\eta) = \min\left\{1, \eta^\mu\right\}, \quad \mu > 0,
\end{equation}
for which
\begin{equation}
	F_\alpha U(\eta) = \begin{cases}
		\frac{1-(1-\eta^{\frac{2}{\alpha}})^{1-\alpha}}{\Gamma(2-\alpha)} + \eta^{\mu} \left(\frac{\Gamma\left(1-\frac{\alpha \mu}{2}\right)}{\Gamma\left(2- \frac{\alpha (2+\mu)}{2}\right)}-\frac{\beta\left(\eta^\frac{2}{\alpha};1-\frac{\alpha \mu}{2},1-\alpha\right)}{\Gamma(1-\alpha)}\right), & 0\leq\eta < 1, \\
		\frac{1}{\Gamma(2-\alpha)}, & \eta \geq 1.
	\end{cases}
\end{equation}
We can now easily compute the discretization error. In Fig. \ref{fig:EKOrder} we depict the maximum error of approximating the EK operator with $\widehat{F}_{\alpha, N}$ with $N$ chosen according to the optimal choice (\ref{eqn:GammaOpt}). The error is plotted with respect to the grid spacing $h$, and the respective orders of approximation are clearly seen. As can be inferred, the graphs increase with a slope corresponding to the quadrature order. However, we note that due to the higher computational complexity of the trapezoid scheme in both function evaluations and the larger $\gamma$, this method is more expensive for the same $h$ compared to the simple rectangle quadrature. 

\begin{figure}
	\centering
	\includegraphics[scale = 0.8]{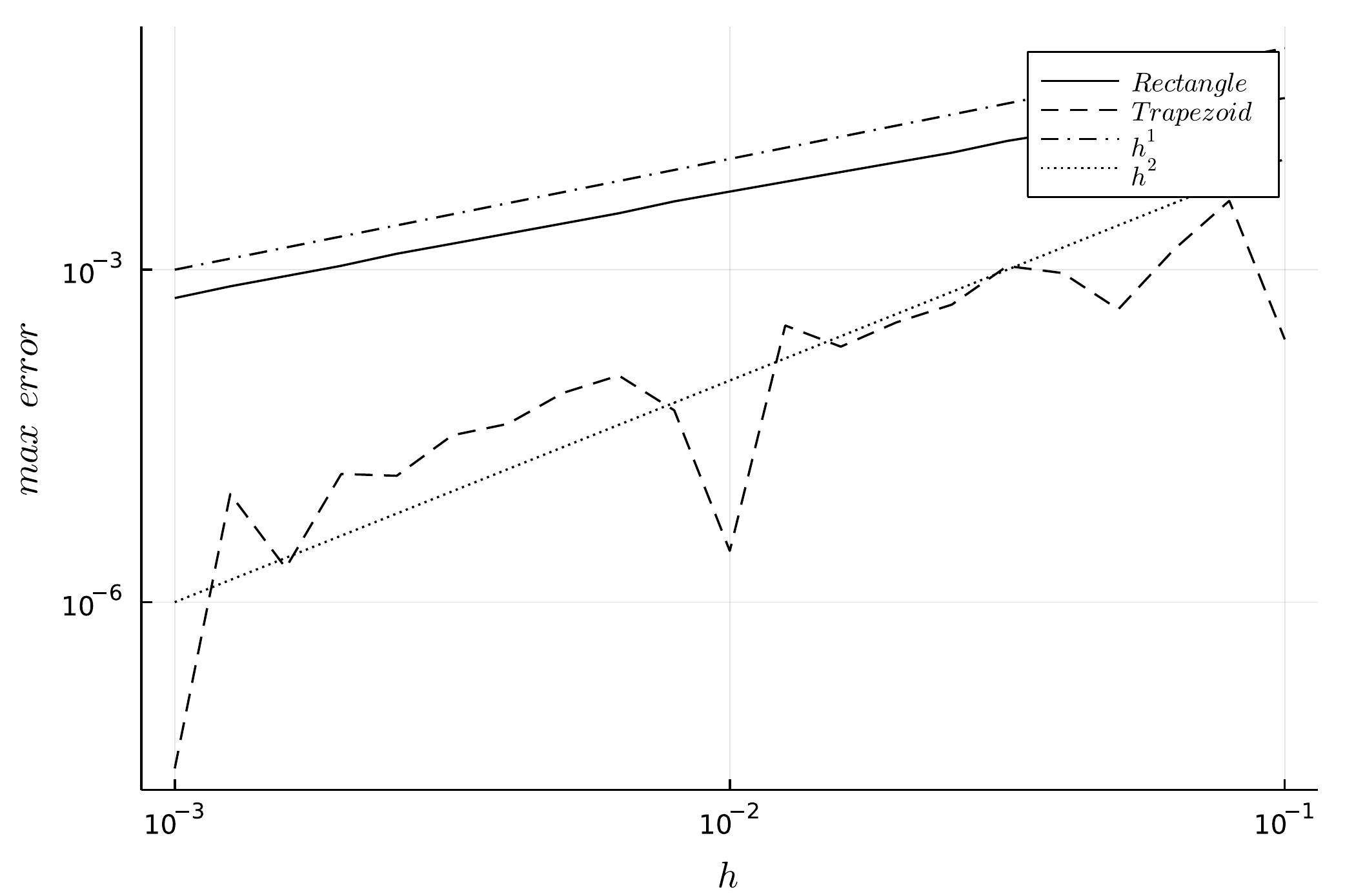}
	\caption{The maximal discretization error with respect to the grid spacing $h$ and $\alpha = 0.5$. Reference lines are added for comparison.}
	\label{fig:EKOrder}
\end{figure}

\subsection{Integro-differential equation}
We can now proceed to discretization of the main equation (\ref{eqn:PorousMediumD}). The strategy is to consider the integral form (\ref{eqn:PorousMediumInt}) rather than the original and solve the problem backwards. If some initial value is prescribed, say $U(0)=M$, we can use the shooting method and look for the zero of a function $\eta^* \mapsto M-U(0, \eta^*)$. 

For what follows, we fix the value of the wetting front $\eta^*$. Since we already know how to discretize the EK operator, it is just a matter of choosing the correct quadrature for the integral in (\ref{eqn:PorousMediumInt}). Similarly, as before, we choose either the rectangle or trapezoid methods. To this end, we naturally choose the integration horizon to $\eta^*$, that is, we choose $N$ in $\widehat{F}_{\alpha, N}$ according to
\begin{equation}
	\eta^* = \eta_N = N h,
\end{equation}
by possibly adjusting $h$ so that $N$ is an integer. Thus a discretization of the integral equation (\ref{eqn:PorousMediumInt}) can be found by splitting the integral into parts
\begin{equation}
	K(U(\eta_n)) = \sum_{j = n}^{N-1} \int_{\eta_j}^{\eta_{j+1}} G(\eta_n, z) F_\alpha U(z)dz.
\end{equation}
Now, approximating the EK operator on each subinterval by a constant or linear function yields the following discretization
\begin{equation}
	\label{eqn:Scheme0}
	K(U_n) = \sum_{j = n}^{N} b^{(r,t)}_{j n} \sum_{i = n}^N a^{(r,t)}_{ij} U_i = \sum_{i = n}^{N} \left(\sum_{j = n}^i  a^{(r,t)}_{i j} b^{(r,t)}_{jn} \right) U_j =: \sum_{i = n}^{N} c^{(r,t)}_{in} U_i,
\end{equation}
where by $U_i$ we have denoted the numerical approximation to the exact solution $U(\eta_i)$. Weights $b^{(r,t)}_{j n}$ correspond to the usual rectangle $(r)$ and trapezoid $(t)$ \textit{product} quadratures for the integral. Note that we do not approximate the kernel $G$ since it can be computed directly. More specifically,
\begin{equation}
	\label{eqn:WeightsRI}
	b^{(r)}_{nn} = 0, \quad b^{(r)}_{jn} = \frac{h^2}{2}\left((A+2B)(2j-1) - 2(A+B) n\right), \quad n<j\leq N,
\end{equation}
and
\begin{equation}
	\begin{split}
		\label{eqn:WeightsTI}
		b^{(t)}_{jn} &= \begin{cases}
			b^{(r)}_{(n+1)n}-\widehat{d}_{nn}, & j = n, \\
			\widehat{d}_{(j-1)n} - \widehat{d}_{jn} + b^{(r)}_{(j+1)n}, & n < i \leq N-1, \\
			\widehat{d}_{(M-1)n}, & j = N, \; \\
		\end{cases} \\
		\widehat{d}_{jn} &= \frac{h^2}{6}\left((A+2B)(3j-1) - 3(A+B) n\right),
	\end{split}
\end{equation}
for rectangle and trapezoid quadratures, respectively. The scheme's coefficients $c_{in}$ can then be computed as the convolution of $a_{ij}$ and $b_{jn}$, which can be done very efficiently with the use of Fast Fourier Transform. Note from (\ref{eqn:WeightsR}) and (\ref{eqn:WeightsRI}) that both rectangle weights vanish for $i = j = n$. This means that the numerical scheme (\ref{eqn:Scheme0}) is \emph{explicit} which makes the method simple and fast. That is, to compute $U_n$ we have to know the values of $U_i$ for $n<i\leq N$. Since $D$ is positive, its integral $K$ is increasing and, hence, has a well-defined inverse. Therefore, we can write
\begin{equation}
	U_n = K^{-1} \left(\sum_{i=n+1}^N c^{(r)}_{in} U_i\right).
\end{equation}
This is especially relevant for the important case of power-law diffusivity $K(u) = u^{m+1}/(m+1)$. Of course, when the analytical form of $K^{-1}$ is not available, it is probably better to apply a root-finding algorithm to (\ref{eqn:Scheme0}), especially in the implicit trapezoid case. The overall procedure is now $\eta$-stepping and computing the values of $U_n$ backwards for $n = N-1, N-2, N-3, \cdots, 1, 0$. 

Note that since the solution has a compact support terminating at $\eta=\eta^*= \eta_N$, we always have
\begin{equation}
	U_N = 0.
\end{equation}
The question arises how to start the scheme (\ref{eqn:Scheme0}) in order \emph{not to} obtain the trivial solution $U_i \equiv 0$. The answer is straightforward for the trapezoid method, for we have 
\begin{equation}
	\label{eqn:TerminalConditionT}
	K(U_{N-1}) = c^{(t)}_{(N-1)(N-1)} U_{N-1} = a^{(t)}_{(N-1)(N-1)}b^{(t)}_{(N-1)(N-1)} U_{N-1}, 
\end{equation}
which is a nonlinear algebraic equation to be solved for $U_{N-1}$. The explicit rectangle method does not have this form, and we have to prescribe the starting value in a different way. To this end, let us return to the integral equation (\ref{eqn:PorousMediumInt}) evaluated for $n = N-1$. If we use the constant function approximation at the \emph{left} endpoint of the interval $(\eta_{N-1},\eta_N)$ we obtain
\begin{equation}
	K(U(\eta_{N-1})) = \int_{\eta_{N-1}}^{\eta_N} G(\eta_{N-1}, z) F_\alpha U(z) dz \approx F_\alpha U(\eta_{N-1}) \int_{\eta_{N-1}}^{\eta_N} G(\eta_{N-1}, z)dz = F_\alpha U(\eta_{N-1}) b^{(r)}_{N(N-1)} 
\end{equation}
Doing the same left approximation in the rectangle quadrature for the EK operator (\ref{eqn:EKDisc}) yields the equation to be solved for the starting value $U_{N-1}$ 
\begin{equation}
	\label{eqn:TerminalConditionR}
	K(U_{N-1}) = a^{(r)}_{N(N-1)}b^{(r)}_{N(N-1)} U(\eta_{N-1}).
\end{equation}
Note the similarity with (\ref{eqn:TerminalConditionT}). For ease of implementation, the whole $\eta$-stepping scheme is summarized in Algorithm \ref{alg:MainScheme}. 

\begin{algorithm}
	\caption{Main $\eta$-stepping scheme for solving (\ref{eqn:PorousMediumD}) with (\ref{eqn:IC}) by the rectangle (r) or trapezoid (t) quadrature. The absolute tolerance is given by a fixed $0<\epsilon<1$.}
	\label{alg:MainScheme}
	\begin{algorithmic}
		\Require $N \in \mathbb{N}$
		\State Define weights $a^{(r,t)}$, $b^{(r,t)}$, and $c^{(r,t)}$ (use FFT) (\ref{eqn:WeightsR}), (\ref{eqn:WeightsT}), (\ref{eqn:Scheme0}), (\ref{eqn:WeightsRI}), (\ref{eqn:WeightsTI})
		\State Define the function $K=K(z)$ by (\ref{eqn:DiffusivityIntegrated})
		\State $U_{0:N} \gets 0$
		\While {$|U_0 - 1| \geq \epsilon$}
		\State Fix $\eta^*>0$ \Comment{A root finding solver does that.}
		\State $h \gets \eta^*/N$
		\State Solve $K(U_{N-1}) = a^{(r,t)}_{N(N-1)}b^{(r,t)}_{N(N-1)} U_{N-1}$ for $U_{N-1}$ 
		\For{$n = N-2:-1:0$}
		\State Solve $K(U_n) = \sum_{i=n}^{N-1} c^{(r,t)}_{in} U_i$ for $U_n$
		\EndFor
		\EndWhile
	\end{algorithmic}
\end{algorithm}

Having described the numerical scheme, we can proceed to proving that it is convergent. The main difficulty is the degeneracy of our equation, that is, the fact that $D(0) = 0$. In order to overcome it, we consider a family of regularizations $D_h$ of the diffusivity that converge to $D$ as $h\rightarrow 0$. For each $D_h$ we obtain a solution $U_{h,n}$ that converges to $U$ as we refine the grid. Let $D_h$ be a family of functions satisfying
\begin{equation}
	\label{eqn:DiffusivityReq}
	0< \epsilon(h) \leq D_h(z), \quad \epsilon(h) \rightarrow 0 \quad\text{as}\quad h\rightarrow 0,
\end{equation}
and 
\begin{equation}
	\|D_h - D\|_\infty \leq \epsilon(h).
\end{equation}
The following result states the convergence proof.
\begin{thm}
	Let $D_h$ be the family of regularizations of $D$ satisfying (\ref{eqn:DiffusivityReq}). Suppose that the weights of the quadrature (\ref{eqn:Scheme0}) satisfy $0<c_{in}\leq C h$ and that the and the quadrature of the integral in (\ref{eqn:DiffusivityIntegrated}) have order $p>0$. Then, there exists an $\epsilon(h)$ such that when $U_{h,n}$ is a solution of (\ref{eqn:Scheme0}) with $D_h$, and $U$ is an exact solution of (\ref{eqn:PorousMediumD}) we have
	\begin{equation}
		|U_{h,n} - U(\eta)| = O(h^{p-\delta} |\ln h|) \quad \text{when} \quad h\rightarrow 0, \quad nh\rightarrow \eta \in [0,\eta^*],
	\end{equation}
	where $0<\delta<1$ is arbitrary. 
\end{thm}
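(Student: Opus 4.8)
The plan is to reduce the convergence question to a discrete Gronwall-type estimate for the error $e_n := U_{h,n} - U(\eta_n)$, working entirely with the integral formulation \eqref{eqn:PorousMediumInt} and its discretization \eqref{eqn:Scheme0}. First I would write the exact solution as the fixed point of the continuous operator and the numerical solution as the fixed point of the discrete one, then subtract. Evaluating the exact equation at the grid points and using the quadrature, I would split the residual into three pieces: (i) the EK-operator discretization error, controlled by the previous Theorem and of size $O(h)$ or $O(h^2)$ depending on the quadrature; (ii) the product-quadrature error for the outer integral against the kernel $G$, which, since $G$ is smooth and explicitly integrated, contributes $O(h^p)$; and (iii) the error from replacing $D$ by the regularization $D_h$, which by the hypothesis $\|D_h - D\|_\infty \le \epsilon(h)$ and the order-$p$ quadrature of \eqref{eqn:DiffusivityIntegrated} contributes something like $O(\epsilon(h) + h^p)$ to $K_h(U_{h,n})$ versus $K(U(\eta_n))$.

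Next I would invert $K$ (resp. $K_h$). This is the delicate point: because $D(0)=0$, the inverse $K^{-1}$ is not Lipschitz near the wetting front $\eta^*$ — near there $K(u)\sim u^{m+1}$ so $K^{-1}$ behaves like $y^{1/(m+1)}$, with unbounded derivative. The regularization saves us: since $D_h \ge \epsilon(h)>0$, we have $K_h'(u) = D_h(u) \ge \epsilon(h)$, so $K_h^{-1}$ is globally Lipschitz with constant $1/\epsilon(h)$. Applying $K_h^{-1}$ to the difference of the two discrete relations and using $|c_{in}|\le Ch$ together with $\sum_{i=n}^{N} 1 = N-n+1 \le \eta^*/h + 1$, I obtain a recursion of the shape
\begin{equation}
	|e_n| \le \frac{C'}{\epsilon(h)}\, h \sum_{i=n+1}^{N} |e_i| + \frac{1}{\epsilon(h)}\left( \tau(h) + \epsilon(h)\right),
\end{equation}
where $\tau(h) = O(h^{\min\{1,p\}})$ (or $O(h^{\min\{2,p\}})$ for the trapezoid rule) collects the consistency terms from (i)–(iii). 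One also has to absorb the starting-value error at $n=N-1$ coming from \eqref{eqn:TerminalConditionR} or \eqref{eqn:TerminalConditionT}, which is of the same order after the same Lipschitz inversion.

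The recursion is a backward discrete Gronwall inequality with step weight $h/\epsilon(h)$; iterating it over at most $N \approx \eta^*/h$ steps yields a factor $\exp(C'\eta^*/\epsilon(h))$. To keep this factor subpolynomial in $h$ we must choose $\epsilon(h)$ tending to $0$ slowly — the natural choice is $\epsilon(h) = c/|\ln h|$, for which $\exp(C'\eta^*/\epsilon(h)) = h^{-C'\eta^*/c}$, and picking $c$ large (relative to $C'\eta^*$, absorbing the constant into an arbitrarily small exponent $\delta$) gives $\exp(C'\eta^*/\epsilon(h)) = O(h^{-\delta})$. Combining, $|e_n| \le \epsilon(h)^{-1}(\tau(h)+\epsilon(h))\cdot O(h^{-\delta}) = O(h^{p-\delta}|\ln h|)$ once we note $\tau(h)/\epsilon(h) = O(h^p |\ln h|)$ dominates (choosing the quadrature consistency order to match $p$) and the $\epsilon(h)/\epsilon(h)=1$ term is multiplied by $h^{-\delta}$ but is itself $O(|\ln h|^{-1})$, hence still $o(h^{-\delta})$ — a small bookkeeping check shows the stated $O(h^{p-\delta}|\ln h|)$ bound holds. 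Finally, to pass from $|e_n|$ at grid points to $|U_{h,n}-U(\eta)|$ with $nh\to\eta$, I would add the interpolation/continuity error $|U(\eta_n)-U(\eta)| \le \|U'\|_\infty |nh-\eta|$, which is lower order.

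The main obstacle is precisely the interplay between the degeneracy and the Gronwall constant: the non-Lipschitz nature of $K^{-1}$ forces the $1/\epsilon(h)$ factor into both the stability constant and the per-step amplification, so the exponential blow-up $\exp(C/\epsilon(h))$ must be tamed by a logarithmically slow regularization, and one must verify that this choice still leaves a positive effective convergence order $p-\delta$. A secondary technical point is confirming that the hypothesis $0 < c_{in} \le Ch$ genuinely holds for the weights defined in \eqref{eqn:Scheme0} — this follows from the explicit formulas \eqref{eqn:WeightsR}, \eqref{eqn:WeightsT}, \eqref{eqn:WeightsRI}, \eqref{eqn:WeightsTI} and the boundedness of the incomplete beta function on the relevant range, but it is what makes the factor $h\sum_{i} |e_i| \le \eta^* \max_i |e_i|$ available and thus underpins the whole argument.
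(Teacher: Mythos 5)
Your overall strategy is the same as the paper's: form the error recursion from the discrete scheme (\ref{eqn:Scheme0}) and the exact integral equation (\ref{eqn:PorousMediumInt}), invert $K_h$ using $D_h \geq \epsilon(h)$ (the paper does this via the mean value theorem, $K_h(U(\eta_n))-K_h(U_{h,n}) = D_h(V_n)e_n$, which is your Lipschitz bound on $K_h^{-1}$ in disguise), apply the backward discrete Gr\"onwall inequality to obtain the amplification factor $e^{C\eta^*/\epsilon(h)}$, and tame it by the logarithmic choice $\epsilon(h)\propto 1/|\ln h|$ so that this factor becomes $h^{-\delta}$. Your additional remarks --- the starting-value error at $n=N-1$, the passage from $\eta_n$ to $\eta$, and the need to verify $0<c_{in}\leq Ch$ from the explicit weight formulas --- are sensible supplements that the paper either assumes as hypotheses or omits.

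There is, however, one step in your bookkeeping that does not close. You correctly observe that the exact solution satisfies the integral equation with $K$, not $K_h$, so the consistency defect contains the term $K_h(U(\eta_n))-K(U(\eta_n))$, of size $O(\epsilon(h))$ under the stated hypothesis $\|D_h-D\|_\infty\leq\epsilon(h)$. After dividing by the Lipschitz constant $\epsilon(h)^{-1}$ of $K_h^{-1}$ and multiplying by the Gr\"onwall factor $h^{-\delta}$, this contributes $\epsilon(h)^{-1}\cdot O(\epsilon(h))\cdot h^{-\delta}=O(h^{-\delta})$, which diverges rather than being $o(h^{-\delta})$ as you claim; the assertion that this term ``is itself $O(|\ln h|^{-1})$'' conflates the term before and after division by $\epsilon(h)$. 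To close the argument one must either strengthen the regularization hypothesis so that $\|K_h-K\|_\infty=O(h^p)$ (absorbing the term into $\rho(h)$), or measure the error against the regularized continuous problem so the defect never appears. The paper's own proof silently sidesteps this point by writing $K_h(U(\eta_n))-K_h(U_{h,n})=\sum_i c_{in}e_i+\rho_n(h)$ with $\rho_n$ consisting only of the quadrature error, i.e., it implicitly assumes the replacement cost is negligible; your more honest accounting exposes a real issue, but your proposed dismissal of it is not valid as written.
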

\begin{proof}
	Denote the error by $e_n := U(\eta_n) - U_{h,n}$ and let the quadrature error for the integral (\ref{eqn:PorousMediumInt}) be denoted by $\rho_n(h)$, that is,
	\begin{equation}
		\int_{\eta_n}^{\eta^*} G(\eta_n, z)F_\alpha U(z) dz = \sum_{i=n}^{N-1} c_{in} U_i + \rho_n(h) \quad \text{where} \quad |\rho_n(h)|\leq \rho(h) \rightarrow 0, \quad h\rightarrow 0. 
	\end{equation}
	Then, from this and (\ref{eqn:Scheme0}) we have
	\begin{equation}
		K_h(U(\eta_n)) - K_h(U_{h,n}) = \sum_{i=n}^{N-1} c_{in} e_i + \rho_n(h),
	\end{equation}
	where $K_h$ is the integral corresponding to $D_h$. Now, by the mean value theorem we can write $K_h(U(\eta_n)) - K_h(U_{n,h}) = K'(V_n) e_n = D_h(V_n) e_n$ for some intermediate value $V_n$, and hence
	\begin{equation}
		D_h(V_n) |e_n| \leq \sum_{i=n}^{N-1} c_{in} |e_i| + \rho(h).
	\end{equation}
	Now, by the construction of $D_h$ we can write
	\begin{equation}
		\epsilon(h)|e_n| \leq \sum_{i=n}^{N-1} c_{in} |e_i| + \rho(h),
	\end{equation}
	which is owing to the fact that $c_{in} \leq C h$
	\begin{equation}
		|e_n| \leq Ch\epsilon(h)^{-1}\sum_{i=n}^{N-1} |e_i| + \epsilon(h)^{-1}\rho(h),
	\end{equation}
	which is a form amenable for the discrete version of the Gr\"onwall inequality (for ex. heorem 7.1 from \cite{linz1985analytical} applied for $f_i := e_{N-i}$). Therefore, 
	\begin{equation}
		|e_n| \leq \epsilon(h)^{-1}\rho(h) \left(1+ Ch\epsilon(h)^{-1}\right)^n \leq \epsilon(h)^{-1}\rho(h) \left(1+ \frac{Cn h\epsilon(h)^{-1}}{n}\right)^n \leq \epsilon(h)^{-1}\rho(h) e^{C \eta^*\epsilon(h)^{-1} },
	\end{equation}
	since $n h \rightarrow \eta \leq \eta^*$. Now, if we choose 
	\begin{equation}
		\epsilon(h) = C \eta^* \left(\delta\ln \frac{1}{h}\right)^{-1} \rightarrow 0 \quad \text{as} \quad h\rightarrow 0,  
	\end{equation}
	for some arbitrary $0<\delta<1$, we obtain
	\begin{equation}
		|e_n| \leq \frac{\delta}{C \eta^*}\rho(h) h^{-\delta} |\ln h|.
	\end{equation}
	If now $\rho(h) = O(h^p)$ with $p>0$ and $h\rightarrow 0$, we have
	\begin{equation}
		|e_n| = O(h^{p-\delta} |\ln h|), \quad h\rightarrow 0,
	\end{equation}
	what finishes the proof. 
\end{proof}
From the above proof we thus see that the order of the scheme for the regularized solution is almost $p$, that is, the order of the quadrature for (\ref{eqn:PorousMediumInt}). The actual order is less by an arbitrary small number $\delta$ and a logarithmic factor. 

We illustrate our theory by some numerical experiments. In what follows, we always choose the rectangle scheme in approximating the solution. Our simulations indicated that although the trapezoidal method is superior when discretizing the pure EK operator (\ref{eqn:EK}) it is very expensive when applied to the nonlinear equation (\ref{eqn:PorousMediumD}). This computational cost comes from a large number of special functions needed to calculate the weights of the trapezoid method (\ref{eqn:WeightsT}) - especially the incomplete beta function. In effect, the temporal and spatial complexity of the algorithm can be prohibitively large. Moreover, the method is implicit without a significant stability gain, and thus requires solving a nonlinear equation in each iteration step. As a benchmark, we have calculated the time ratio of computations needed to obtain the wetting front position with the diffusivity $D(u) = u^2$ for number of subdivisions $N=2^8$ for different values of $\alpha$. In Tab. \ref{tab:TimeRatio} we present the quantity 
\begin{equation}\label{eqn:TimeRatio}
	\tau = \frac{\text{time of computations for trapezoid method}}{\text{time of computations for the rectangle method}}.
\end{equation}
Immediately we see that computations with the trapezoidal method are at least \emph{one hundred} times slower than with the rectangle method. We can conclude that the increase in accuracy for the second order method does not compensate the high increase in computational cost. We have thus decided that a less accurate but much faster explicit rectangle method will be the scheme of choice. An efficient second-order explicit scheme for the power-law case, i.e. $D(u) \propto u^m$ has been devised in \cite{okrasinska2022second} by different means that cannot be generalized to the arbitrary diffusivity. 

\begin{table}
	\centering
	\begin{tabular}{cccccc}
		\toprule
		$\alpha$ & 0.1 & 0.25 & 0.5 & 0.75 & 0.9 \\
		\midrule
		$\tau$ & 91 & 230 & 168 & 197 & 380 \\
		\bottomrule
	\end{tabular}
	\caption{Time ratio $\tau$ defined in (\ref{eqn:TimeRatio}) for computing the wetting front position with $D(u) = u^2$, $N=2^8$ with the trapezoid and rectangle method. }
	\label{tab:TimeRatio}
\end{table}

\begin{figure}
	\centering
	\includegraphics[scale = 0.8]{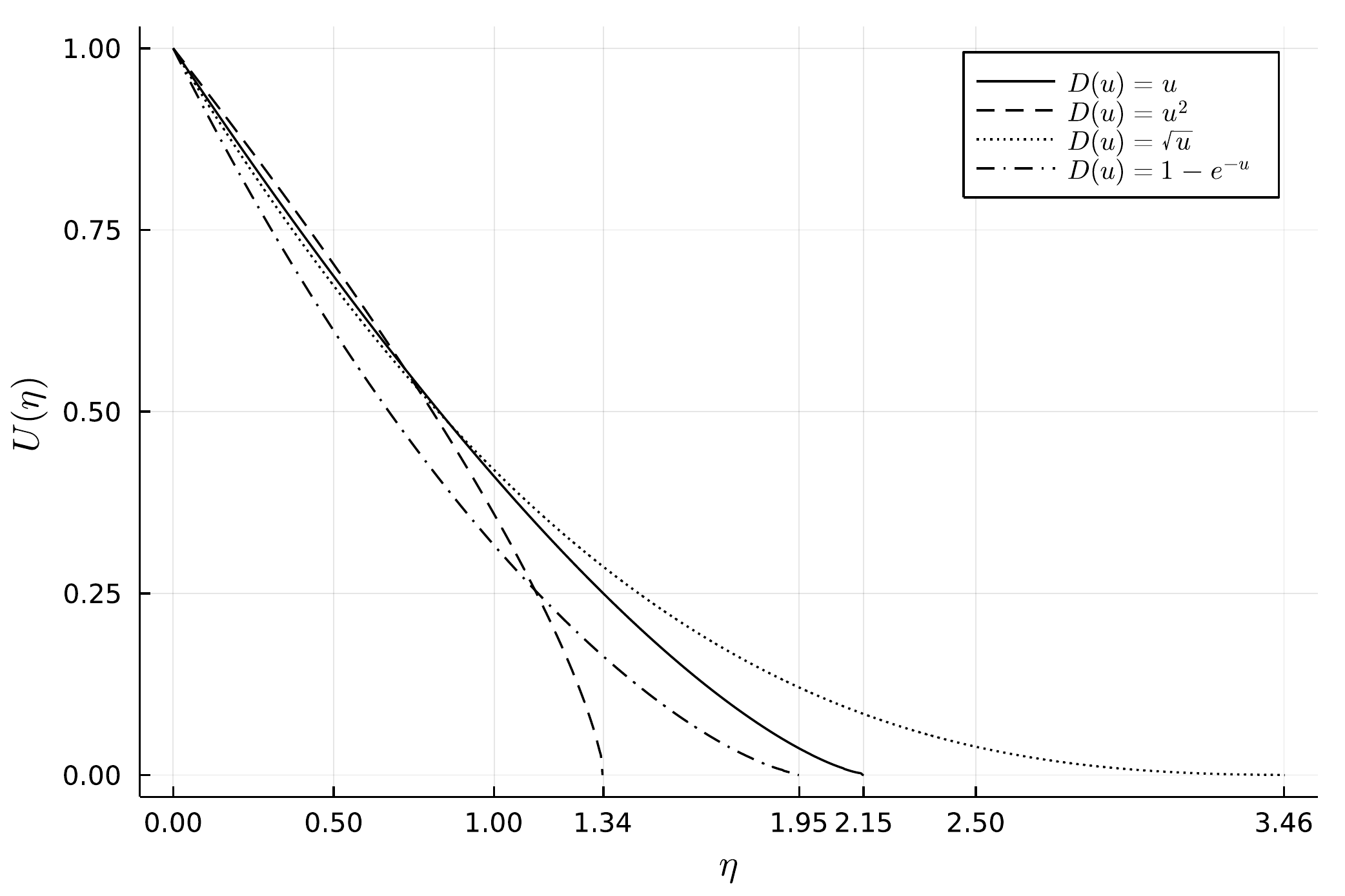}
	\caption{An exemplary plot of solutions to (\ref{eqn:PorousMediumD}) with $U(0) = 0$ for $\alpha = 0.5$ calculate with the Algorithm \ref{alg:MainScheme}. Different diffusivities are indicated in the legend. }
	\label{fig:Solutions}
\end{figure}

In what follows we always solve (\ref{eqn:PorousMediumD}) with the initial condition $U(0) = 1$ with the Algorithm \ref{alg:MainScheme} where we use Newton's iteration for finding $\eta^*$. Some exemplary plots of typical solutions of the porous medium equation are depicted in Fig. \ref{fig:Solutions} for several choices of the diffusivity
\begin{equation}
	\label{eqn:Diffusivities}
	D_{BC}(u) = u^m, \quad D_{exp}(u) = 1-e^{-u}, \quad m \geq 1,
\end{equation}
where the first choice is the typical porous medium power-type diffusivity (in hydrology known as the Brooks-Correy model) and the second is the exponential formula. By a simple limit test, we can verify that the necessary condition for the compact support (\ref{eqn:DiffusivityInt}) is satisfied for each of the diffusivities. 

To illustrate convergence, we present two tests. The first one is an empirical calculation of the convergence order using extrapolation (Aitken's method)
\begin{equation}
	\label{eqn:Aitken}
	\text{order } \approx \log_2 \frac{|\eta^*_{(2N)}-\eta^*_{(N)}|}{|\eta^*_{(4N)}-\eta^*_{(2N)}|},
\end{equation}
in which we compare the wetting front (the worst-case scenario) calculated for different numbers of subdivisions $N$ (hence, twice or quadruple smaller grid spacing $h$). The results for two different diffusivities are presented in the Tab. \ref{tab:Order}. As can be seen, the numerical estimates are consistent with the theoretical predictions that the rectangle quadrature should retain its order. This is not obvious since it is widely known that even for the \emph{linear diffusion}, the discretizations of fractional derivatives may converge with lower order than $1$ depending on the regularity of the solution (for a comprehensive account, see \cite{stynes2017error}. We can see that for $\alpha = 0.75$ the method converged even faster. 

\begin{table}
	\centering
	\begin{tabular}{cccccc}
		\toprule
		$\alpha$ & $0.1$ & $0.25$ & $0.5$ & $0.75$ & $0.9$ \\
		\midrule
		order for $D_{BC}$ & 0.95 & 0.98 & 0.99 & 1.255 & 0.97 \\
		order for $D_{exp}$ & 0.96 & 0.98 & 0.99 & 1.08 & 0.98 \\
		\bottomrule
	\end{tabular}
	\caption{Estimated order of quadrature for the scheme (\ref{eqn:Scheme0}) based on extrapolation (\ref{eqn:Aitken}) applied for calculating the wetting front $\eta^*$. Two diffusivities has been chosen from (\ref{eqn:Diffusivities}): $D_{BC}$ with $m=1$ and $D_{exp}$. The base number of iteration is $N = 300$.}
	\label{tab:Order}
\end{table}

The second test we conduct is once again based on finding the wetting front, but not in the classical case, since then the exact values are available (see \cite{okrasinski1993power}). The results are gathered in the Tab. \ref{tab:Wetting}. The results are decent based on the fact that a small number of steps have been taken. The error decays at a rate $O(N^{-1})$ as $N\rightarrow \infty$. This verifies the fact that our method is convergent even in the classical case. However, if one wants to conduct precision calculations on the wetting front for power type diffusivity, it is recommended to use our second order method \cite{okrasinska2022second}. The present one has the advantage of being fast and robust when it comes to choosing a general form of diffusivity. 

\begin{table}
	\centering
	\begin{tabular}{ccccccc}
		\toprule
		N & 10 & 50 & 100 & 200 & 500 & 1000 \\
		\midrule
		error & $5.5\times 10^{-2}$ & $3.3 \times 10^{-2}$ & $2.2\times 10^{-2}$ & $1.3\times 10^{-2}$ & $7.0\times 10^{-3}$ & $4.0\times 10^{-3}$ \\
		\bottomrule
	\end{tabular}
	\caption{Absolute errors of calculating the wetting front $\eta^*$ for different values of subdivisions of the interval $[0,\eta^*]$ for $D_BC(u) = u$ and $\alpha = 1$. The reference exact values were taken from \cite{okrasinski1993power}.}
	\label{tab:Wetting}
\end{table}

\section{Conclusion}
The time-fractional porous medium equation models several important experimental settings in material science, hydrology, and construction engineering. We have proved that the problem with general diffusivity has a unique solution that has a self-similar form. The main role was played here by the Erd\'elyi-Kober fractional operator and its careful analysis. On the practical side, we have devised a robust numerical method that can be easily used by practitioners. 

In our future work, we plan to resign from the small initial value requirement and to consider a generalized version of (\ref{eqn:MainPDE}) where we will allow for a nonlocal in space operator. This will enlarge the number of possible modeling situations and include the superdiffusive case, which has also been found in many experiments. 

\section*{Acknowledgement}
Ł.P. has been supported by the National Science Centre, Poland (NCN) under the grant Sonata Bis with a number NCN 2020/38/E/ST1/00153.

\bibliography{biblio}
\bibliographystyle{plain}

\end{document}